\pgfplotsset{compat=1.10}
\pgfplotsset{soldot/.style={color=black,only marks,mark=*}} \pgfplotsset{holdot/.style={color=black,fill=white,only marks,mark=*}}
\newtheorem{thm}{Theorem}[section]
\newtheorem{lem}[thm]{Lemma}
\newtheorem{prop}[thm]{Proposition}
\newtheorem{prodef}[thm]{Proposition and Definition}
\newtheorem{defnsprop}[thm]{Definitions and Proposition}
\newtheorem{cor}[thm]{Corollary}
\newtheorem{defn}[thm]{Definition}
\newtheorem{nots}[thm]{Notations}
\newtheorem{remark}[thm]{Remark}
\newtheorem{remarks}[thm]{Remarks}
\newtheorem{example}[thm]{Example}
\newtheorem{examples}[thm]{Examples}
\newtheorem{com}[thm]{Comment}
\newcounter{substep}
\def\thesubstep{\arabic{substep}}
 \newcommand{\N}{{\mathbb N}}
\newcommand{\Z}{{\mathbb Z}} \newcommand{\R}{{\mathbb R}}
 \newcommand{\BB}{{\mathbb B}}
 \newcommand{\J}{{\mathcal J}}
\newcommand{\gtp}{{\mathfrak p}} \newcommand{\gtq}{{\mathfrak q}}
\newcommand{\gtm}{{\mathfrak m}} \newcommand{\gtn}{{\mathfrak n}}
\newcommand{\gta}{{\mathfrak a}} \newcommand{\gtb}{{\mathfrak b}}
\newcommand{\Dd}{{\EuScript D}}
\newcommand{\Zz}{{\EuScript Z}}
\newcommand{\Uu}{{\EuScript U}}
\newcommand{\Pp}{{\EuScript P}}
\newcommand{\Ff}{{\EuScript F}}
\newcommand{\qf}{\operatorname{qf}}
\newcommand{\Int}{\operatorname{Int}}
\newcommand{\dist}{\operatorname{dist}}
\newcommand{\hgt}{\operatorname{ht}}
\newcommand{\Max}{\operatorname{Max}}
\newcommand{\Spec}{\operatorname{Spec}}
\newcommand{\cl}{\operatorname{Cl}}
\newcommand{\betas}{\operatorname{\beta_s\!}}
\newcommand{\betaa}{\operatorname{\beta_s^*\!\!}}
\newcommand{\ev}{\operatorname{ev}}
\newcommand{\Sat}{\operatorname{Sat}}
\newcommand{\Inter}{\operatorname{Inter}}
\newcommand{\ceros}{\operatorname{\mathcal Z}}
\newcommand{\x}{{\tt x}}  
 \renewcommand{\t}{{\tt t}}
\newcommand{\ol }{\overline}
\begin{document}

\title[Intermediate algebras of semialgebraic functions]{Intermediate algebras of semialgebraic functions}

\author{E. Baro}
\author{Jos\'e F. Fernando}
\author{J.M. Gamboa}

\address{Departamento de \'Algebra, Geometr\'\i a y Topolog\'\i a, Facultad de Ciencias Matem\'aticas, Universidad Complutense de Madrid, Plaza de Ciencias 3, 28040 MADRID (SPAIN)}

\email{eliasbaro@pdi.ucm.es}
\email{josefer@mat.ucm.es}
\email{jmgamboa@mat.ucm.es}

\subjclass[2010]{Primary 14P10, 54C30; Secondary 12D15}
\keywords{Semialgebraic set, semialgebraic function, subalgebra.}
\thanks{Authors supported by Spanish STRANO PID2021-122752NB-I00 and Grupos UCM 910444}

\begin{abstract}
We characterize intermediate $\R$-algebras $A$ between the ring of semialgebraic functions ${\mathcal S}(X)$ and the ring ${\mathcal S}^*(X)$ of bounded semialgebraic functions on a semialgebraic set $X$ as rings of fractions of ${\mathcal S}(X)$. This allows us to compute the Krull dimension of $A$, the transcendence degree over $\R$ of the residue fields of $A$ and to obtain a \L ojasiewicz inequality and a Nullstellensatz for archimedean $\R$-algebras $A$. In addition we study intermediate $\R$-algebras generated by proper ideals and we prove an extension theorem for functions in such $\R$-algebras.
\end{abstract}

\maketitle


\maketitle
\setcounter{tocdepth}{2}
{\small
\begin{spacing}{0.01}
\tableofcontents
\end{spacing}
}

\section{Introduction}\label{s1}

A subset $X\subset\R^n$ is said to be \textit{basic semialgebraic} if it can be written as
$$
X:=\{x\in\R^n:\ f(x)=0,\, g_1(x)>0,\ldots,g_{\ell}(x)>0\}
$$
for some polynomials $f,g_1,\ldots,g_{\ell}\in\R[\x_1,\ldots,\x_n]$. The finite unions of basic semialgebraic sets are called \emph{semialgebraic sets}. 

A continuous function $f:X\to\R$ is said to be \emph{semialgebraic} if its graph is a semialgebraic subset of $\R^{n+1}$. Usually, semialgebraic function just means a function, non necessarily continuous, whose graph is semialgebraic. However, since all semialgebraic functions occurring in this article are continuous we will omit for simplicity the continuity condition when we refer to them. 

The sum and product of functions, defined pointwise, endow the set ${\mathcal S}(X)$ of semialgebraic functions on $X$ with a natural structure of commutative ring whose unity is the semialgebraic function ${\bf 1}_M$ with constant value 1. In fact ${\mathcal S}(X)$ is an $\R$-algebra, if we identify each real number $r$ with the constant function which just attains this value. The most simple examples of semialgebraic functions on $X$ are the restrictions to $X$ of polynomials in $n$ variables. Other relevant ones are the absolute value of a semialgebraic function, the distance function to a given semialgebraic set, the maximum and the minimum of a finite family of semialgebraic functions, the inverse and the $k$-root of a semialgebraic function whenever these operations are well-defined.

Let ${\mathcal S}^{*}(X)$ be the subring of bounded semialgebraic functions on $X$. It is indeed an $\R$-algebra and we will identify each real number $r\in\R$ with the constant function of value $r$. 

The study of rings of continuous functions has deserved a lot of attention from specialists in analysis, topology and algebra. The history of this theory is long and rich and its main development goes back to the 1950's and 1960's. This subject contributed in an important way to the appearance and evolution of well-known tools in Mathematics like the Stone-\v{C}ech compactification, the theory of nets and filters, the spectrum and the maximal spectrum of a commutative ring, \ldots We refer the reader to \cite{gj} for a very detailed study of rings of continuous functions. 

This article is inspired by the work \cite{dgm}, where the authors studied intermediate algebras between the $\R$-algebra ${\mathcal C}(X)$ of continuous real valued functions on a topological space $X$ and its sub-algebra ${\mathcal C}^{*}(X)$ of bounded functions, which has a precedent in the paper \cite{rs}. To work with continuous functions (on a completely regular topological space $X$) presents an advantage over the semialgebraic setting: the Stone-\v{C}ech compactification $\betas X$ is a topological space containing $X$ as a dense subspace, and so it makes sense to consider the $\R$-algebra ${\mathcal C}(\betas X)$. This is not so in the semialgebraic context. The so called semialgebraic Stone-\v{C}ech compactification $\betaa X$ of a semialgebraic set $X$ introduced in \cite{fg2} enjoys most of the properties of the classical Stone-\v{C}ech compactification; namely, it is a compact and Hausdorff topological space one of whose models is the set $\betaa X:=\Max({\mathcal S}^{*}(X))$ of maximal ideals of the ring ${\mathcal S}^{*}(X)$, the map
$$
{\tt j}:X\to\betaa X,\, x\mapsto\gtm^{*}_x:=\{f\in{\mathcal S}^{*}(X):\, f(x)=0\}
$$
is continuous, and ${\tt j}(X)$ is a dense subspace of $\betaa X$. This is why we identify $X$ with ${\tt j}(X)$. In addition, it is proved in \cite[4.4]{fg2} that for every $f\in{\mathcal S}^*(X)$ there exists a continuous function ${\widehat f}:\betaa X\to\R$ such that ${\widehat f}\circ{\tt j}=f$. But, in spite of its name, $\betaa X$ is very rarely a semialgebraic set, so it has no sense to consider the ring ${\mathcal S}(\betaa X)$. Indeed, it was proved in \cite{fg2} that $\betaa X$ is homeomorphic to a semialgebraic set if and only if $\betaa X\setminus X$ is finite.

On the other hand, the topology of semialgebraic sets is more friendly, which allows to achieve sharper results by using specific techniques of semialgebraic geometry. Although they are neither Noetherian nor enjoy primary decomposition properties, rings of semialgebraic functions are closer to polynomial rings than to classical rings of continuous functions. For example, the Lebesgue dimension of $\R$ is 1, see Problem 16F in \cite{gj},
whereas the Krull dimension of the ring ${\mathcal C}(\R)$ of real valued continuous functions on $\R$ is infinite, see Problem 14I \cite{gj}, and the Krull dimension of the ring ${\mathcal S}(\R)$ equals one, see \cite[Thm 1.1 and Thm. 1.2]{fg1}. Another simple but crucial example is the following. It was proved in \cite[2.2.8]{bcr} that the function
$$
f:X\to\R,\, x\mapsto\dist(x,Z):=\inf\, \{\|x-z\|:\, z\in Z\}
$$
is semialgebraic and so each closed semialgebraic subset $Z$ of $X$ satisfies $Z=f^{-1}(0)$. Thus, in the semialgebraic context, to be closed and to be the zeroset of a continuous semialgebraic function are the same thing. 

Indeed many tools of semialgebraic geometry has been succesfully employed in \cite{fg3}, \cite{fg6}, \cite{fg1}, \cite{fg2} and \cite{bfg} to obtain a lot of information about rings of semialgebraic functions. We will use the results obtained in these previous works.

Along this article we fix a semialgebraic set $X\subset\R^n$ and an intermediate $\R$-algebra $A$ containing ${\mathcal S}^*(X)$ and contained in ${\mathcal S}(X)$, and it is organized as follows. In Section \ref{s2}, we collect most of the preliminary definitions, notations and results that will be used freely in the sequel. Next, in Section \ref{s3} we construct in Theorem \ref{loc1} a natural bijection between the intermediate $\R$-algebras containing ${\mathcal S}^{*}(X)$ and contained in ${\mathcal S}(X)$ and the family of saturated multiplicatively closed subsets $D$ of ${\mathcal S}^*(X)$ such that $f^{-1}(0)=\varnothing$ for every $f\in D$. As a first consequence we prove in Corollary \ref{trdeg} that if $\kappa_{\gtp}:=\qf(A/\gtp)$ denotes the field of fractions of $A/\gtp$, where $\gtp$ is a prime ideal of $A$, then the transcendence degree over $\R$ of $\kappa_{\gtp}$ is finite and upperly bounded by $\dim (X)$. To finish Subsection \ref{krytrdeg}  we deduce from Theorem \ref{loc1} that the Krull dimension $\dim(A)$ of $A$ equals $\dim(X)$ in Proposition \ref{krull}. In Subsection \ref{s31} we show that the set of prime ideals containing a prime ideal of $A$ is a totally ordered set. This implies that $A$ is a Gelfand ring, that is, every prime ideal is contained in a unique maximal ideal of $A$. The main result is Theorem \ref{stone}, where we prove that $\Max(A)$ is a model of the semialgebraic Stone-\v{C}ech compactification of $X$. In Subsection \ref{s33} we study fixed and free ideals of $A$. We characterize fixed maximal ideals of $A$ in Theorem \ref{freefixed}. Next we introduce an unusual terminology. In Definition \ref{arquimedianO} we say that a maximal ideal $\gtm$ of $A$ is \em archimedean \em if the natural inmersion $\R\hookrightarrow A/\gtm$ is surjective. These ideals have been called  frequently \em real \em in the literature. But the adjective \em real \em has a different meaning in Real Algebraic Geometry: an ideal $\gta$ of an unitary and commutative ring $R$ is said to be \em real \em if whenever the elements $f_1,\dots,f_r\in R$ satisfy $f_1^2+\cdots+f_r^2\in\gta$, then each $f_i\in\gta$. The main results of this subsection are a \L ojasiewicz inequality \ref{lojasiewicz} and a Nulltellensatz \ref{radical} for intermediate $\R$-algebras whose maximal ideals are archimedean.

In Section \ref{s4} we study intermediate algebras generated by proper ideals. We prove first that we may assume that they are generated by $z$-ideals. Secondly, we prove in Theorem \ref{EXT} an extension result for functions in such $\R$-algebras.

\section{Preliminaries}\label{s2}

\subsection{Localization and Zariski spectrum}\label{localization1} A subset $D$ of a commutative and unitary ring $R$ is said to be \em multiplicatively closed \em if the unit element $1_R$ belongs to $D$, the zero element $0_R\notin D$ and given $d,e\in D$ then $ed\in D$. We denote $\Uu(R)$ the subset consisting on the units of $R$, that is, the invertible elements in $R$. The map $\varphi_D:R\to R_D,\, r\mapsto r\cdot1_R^{-1}$ is a homomorphism of unitary rings whose kernel is the intersection of $D$ with the set of zero divisors of $R$. Thus $\varphi_D$ is injective if and only if $D$ does not contain zero divisors of $R$. In addition, $\varphi_D$ is an isomorphism if and only if $D\subset\Uu(R)$.

The multiplicatively closed subset $D$ of $R$ is said to be \em saturated \em if whenever $x,y\in R$ satisfy $xy\in D$ then $x,y\in D$.

Let $\gta$ be an ideal of $R$.  Then, the \em extended ideal \em $\gta^{e}$ of $R_D$, that is, the smallest ideal of $R_D$ containing $\gta$, is the set $\gta^{e}:=\{ad^{-1}:\, a\in\gta, \, d\in D\}$. Hence, $\gta^{e}$ is a \em proper \em ideal of $R_D$, that is, $\gta^{e}\neq R_D$, if and only if $\gta\cap D=\varnothing$. On the other hand, let $\gtb$ be a proper ideal of $R_D$. Then, its \em contraction \em $\gtb^{c}:=\varphi_D^{-1}(\gtb)$ is an ideal of $R$ satisfying $\gtb^{c}\cap D=\varnothing$. In addition, $(\gtb^{c})^{e}=\gtb$. 

The set $\Spec(R)$ consisting of all prime ideals of $R$ is endowed with its Zariski topology, which has the family of subsets
$$
\Dd_R(f):=\{\gtp\in\Spec(R): f\notin\gtp\},\, \text{ where } \, f\in R,
$$
as a basis of open subsets. It is a compact space, see e.g. \cite[Ch. I, Ex. 17]{am}. In particular, the map
$$
\{\gtp\in\Spec(R):\gtp\cap D=\varnothing\}\to\Spec(R_D),\, \gtp\mapsto \gtp^{e}
$$
is a homeomorphism whose inverse is the map
$$
\Spec(R_D)\to\{\gtp\in\Spec(R):\gtp\cap D=\varnothing\},\, \gtq\mapsto\gtq^{c}.
$$ 
The \em maximal spectrum \em of $R$ is the subset $\Max(R)\subset\Spec(R)$ consisting of all maximal ideals of $R$. Indeed $\Max(R)$ is compact too because every covering of $\Max(R)$ by subsets of the form $\Dd_R(f)$ is a covering of $\Spec(R)$.
\subsection{Dimension of semialgebraic sets} The \em dimension \em $\dim(X)$ of a semialgebraic set $X\subset\R^n$ is defined as the dimension of the smallest algebraic set containing $X$. In other words, $\dim(X)$ is the Krull dimension of the quotient ring $\Pp(X):=\R[\x_1,\dots,\x_n]/{\mathcal J}(X)$, where
$$
{\mathcal J}(X):=\{f\in\R[\x_1,\dots,\x_n]: X\subset f^{-1}(0)\}.
$$
It is proved in \cite[2.8.10]{bcr} that given a semialgebraic set $X\subset\R^n$ and a point $x\in X$ there exists a semialgebraic neighbourhood  $U$ of $x$ in $X$ such that $\dim(U)=\dim(V)$ for every semialgebraic neighborhood $V\subset U$ of $x$. This common value is called the \em local dimension \em of $X$ at $x$.

\subsection{Semialgebraic Stone-\v{C}ech compactification of a semialgebraic set}\label{stone1} 

For every real valued function $f:X\to\R$ we denote $\Zz_X(f):=f^{-1}(0)$.

\noindent (1) Recall that a \em compactification \em of a topological space $Z$ is a pair $(K,{\tt j})$, where ${\tt j}:Z\to K$ is a continuous map and ${\tt j}(Z)$ is a dense subset of the compact space $K$. 

\noindent (2) Given two compactifications $(K_1,{\tt j}_1)$ and $(K_2,{\tt j}_2)$ of a topological space $Z$ it is said that $(K_2,{\tt j}_2)$ \em dominates \em $(K_1,{\tt j}_1)$, and we write $(K_1,{\tt j}_1)\preccurlyeq(K_2,{\tt j}_2)$, if there exists a continuous surjective map $\rho:K_2\to K_1$ such that $\rho\circ{\tt j}_2={\tt j}_1$. Note that since ${\tt j}_i(Z)$ is dense in $K_i$ for $i=1,2$, the map $\rho$ is unique satisfying the equality above. We say that $(K_1,{\tt j}_1)$ is \em smaller \em that $(K_2,{\tt j}_2)$.

\noindent (3) A compactification $(K,{\tt j})$ of a semialgebraic set $X$ is \em semialgebraically complete, \em see \cite[4.2]{fg2}, if for each $f\in{\mathcal S}^*(X)$ there exists a continuous function $F:K\to\R$ such that $f=F\circ{\tt j}$. Notice that $K$ is not necessarily a semialgebraic set. 

\noindent (4) It was introduced in \cite{fg2} the so called \em semialgebraic Stone-\v{C}ech compactification of a semialgebraic set $X$. \em It is a compact and Hausdorff topological space $\betaa X$ one of whose models is the set $\betaa X:=\Max({\mathcal S}^{*}(X))$ of maximal ideals of the ring ${\mathcal S}^{*}(X)$. The map
$$
{\tt j}:X\to\betaa X,\, x\mapsto\gtm^{*}_x:=\{f\in{\mathcal S}^{*}(X):\, f(x)=0\}
$$
is continuous and ${\tt j}(X)$ is a dense subspace of $\betaa X$. For simplicity we identify $X$ with ${\tt j}(X)$.  

In addition, it is proved in \cite[4.4]{fg2} that for every $f\in{\mathcal S}^*(X)$ there exists a continuous function ${\widehat f}:\betaa X\to\R$ such that ${\widehat f}\circ{\tt j}=f$. For every semialgebraic subset $Y$ of $X$ we denote $\cl_{\betaa X}(Y)$ the closure in $\betaa X$ of $Y$ and $\Int_X(Y)$ the interior of $Y$ in $X$. 

\noindent (5) It was proved in \cite[4.4.3]{fg2} that the semialgebraic Stone-\v{C}ech compactification $\betaa X$ of $X$ is the smallest among the semialgebraically complete compactifications of $X$.

\section{Intermediate $\R$-algebras as rings of fractions and consequences}\label{s3}

As announced in the Introduction we construct first a bijection between the intermediate $\R$-algebras containing ${\mathcal S}^{*}(X)$ and contained in ${\mathcal S}(X)$ and the family of saturated multiplicatively closed subsets $D$ of ${\mathcal S}^*(X)$ such that $\Zz_X(f)=\varnothing$ for every $f\in D$.

\subsection{Transcendence degree of the residual fields and Krull dimension}\label{krytrdeg}Let $D\subset{\mathcal S}^{*}(X)$ be a multiplicatively closed subset such that $\Zz_X(f)=\varnothing$ for every $f\in D$. Thus $D\subset\Uu({\mathcal S}(X))$ and 
$$
{\mathcal S}^{*}(X)\subset{\mathcal S}^{*}(X)_D\subset{\mathcal S}(X),
$$ 
i.e. $A:={\mathcal S}^{*}(X)_D$ is an intermediate $\R$-algebra between ${\mathcal S}^{*}(X)$ and ${\mathcal S}(X)$. 

In particular it was proved in \cite[3.2]{fg3} that ${\mathcal S}(X)={\mathcal S}^{*}(X)_D$, where $D$ denotes the multiplicatively closed subset of ${\mathcal S}^{*}(X)$ consisting on all $f\in{\mathcal S}^*(X)$ whose zeroset $\Zz_X(f)$ is empty, that is, all $f\in{\mathcal S}^*(X)$ that are invertible in ${\mathcal S}(X)$.

\begin{prop} \label{saturado} The following conditions hold:

\noindent \em (1) \em The set $D_A:=\Uu(A)\cap{\mathcal S}^*(X)$ is a saturated multiplicatively closed subset of ${\mathcal S}^*(X)$. 

\noindent \em (2) \em We have $A={\mathcal S}^*(X)_{D_A}$.
\end{prop}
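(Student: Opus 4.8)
The plan is to establish (1) and (2) together, leaning on the fact that $A$ is an intermediate $\R$-algebra, $\mathcal{S}^*(X) \subseteq A \subseteq \mathcal{S}(X)$, and on the description $\mathcal{S}(X) = \mathcal{S}^*(X)_D$ with $D$ the set of all $f \in \mathcal{S}^*(X)$ with $\Zz_X(f) = \varnothing$ (quoted from \cite[3.2]{fg3}). First I would verify that $D_A := \Uu(A) \cap \mathcal{S}^*(X)$ is multiplicatively closed: it contains $1$, it does not contain $0$ since $0$ is not a unit, and it is closed under products because a product of units is a unit and a product of bounded functions is bounded. Next, saturation: if $f, g \in \mathcal{S}^*(X)$ and $fg \in D_A$, then $fg$ is a unit of $A$, so $f$ and $g$ are units of $A$ (in any commutative ring, a divisor of a unit is a unit), and they are bounded by hypothesis, hence $f, g \in D_A$. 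A small but essential observation to record here is that $\Zz_X(f) = \varnothing$ for every $f \in D_A$: a unit of $A$ is in particular a unit of $\mathcal{S}(X)$ (as $A \subseteq \mathcal{S}(X)$), and a semialgebraic function with a zero cannot be invertible in $\mathcal{S}(X)$; so $D_A \subseteq D$.

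For (2), the inclusion $\mathcal{S}^*(X)_{D_A} \subseteq A$ is the easy direction: every $d \in D_A$ is invertible in $A$, so the universal property of localization gives a ring homomorphism $\mathcal{S}^*(X)_{D_A} \to A$ which is injective (since $D_A$ contains no zero divisors, being a subset of $\Uu(\mathcal{S}(X))$) and whose image is the subring of $A$ generated by $\mathcal{S}^*(X)$ and the inverses $d^{-1}$, $d \in D_A$; this is contained in $A$. The reverse inclusion $A \subseteq \mathcal{S}^*(X)_{D_A}$ is the heart of the matter. I would take an arbitrary $f \in A$ and must exhibit it as $g \cdot d^{-1}$ with $g \in \mathcal{S}^*(X)$ and $d \in D_A$. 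Since $f \in \mathcal{S}(X) = \mathcal{S}^*(X)_D$, we already know $f = g \cdot d^{-1}$ for some $g \in \mathcal{S}^*(X)$ and some $d \in \mathcal{S}^*(X)$ with $\Zz_X(d) = \varnothing$; the issue is that this particular $d$ need not lie in $D_A$, i.e. need not be a unit of $A$. The standard trick is to replace $d$ by a better denominator: one commonly used choice is $d' := \tfrac{1}{1+f^2}$, or more precisely I would consider the bounded semialgebraic function obtained by clamping $1+f^2$, arranging that $d' f \in \mathcal{S}^*(X)$ while $d'$ is itself a unit of $A$. Concretely, since $f \in A$ and $A$ is an $\R$-algebra containing $\mathcal{S}^*(X)$, the function $1 + f^2 \in A$; it has no zeros and its inverse $\tfrac{1}{1+f^2}$ lies in $\mathcal{S}^*(X)$ (it is bounded between $0$ and $1$) and is a unit of $A$ with inverse $1+f^2 \in A$. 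Hence $\tfrac{1}{1+f^2} \in D_A$. Moreover $\tfrac{f}{1+f^2} \in \mathcal{S}^*(X)$, being bounded (by $\tfrac12$) and semialgebraic. Therefore $f = \tfrac{f}{1+f^2} \cdot \left(\tfrac{1}{1+f^2}\right)^{-1} \in \mathcal{S}^*(X)_{D_A}$, which is exactly what we need.

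The main obstacle, and the point that requires care rather than cleverness, is precisely this last step: producing, for an arbitrary $f \in A$, a denominator that is simultaneously (a) bounded, (b) a unit of $A$, and (c) such that multiplying $f$ by it lands back in $\mathcal{S}^*(X)$. The choice $\tfrac{1}{1+f^2}$ handles all three at once because squaring keeps us inside the $\R$-algebra $A$, adding $1$ removes all zeros and makes the reciprocal bounded, and the product $\tfrac{f}{1+f^2}$ is automatically bounded. One should double-check that $\tfrac{1}{1+f^2}$ and $\tfrac{f}{1+f^2}$ are genuinely continuous semialgebraic functions on $X$ — this is immediate since $1+f^2$ is a continuous semialgebraic function that is everywhere positive, so its reciprocal is continuous and semialgebraic, and semialgebraicity is preserved by the ring operations. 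With $D_A \subseteq \Uu(\mathcal{S}(X))$ guaranteeing injectivity of $\varphi_{D_A}$, both inclusions combine to give $A = \mathcal{S}^*(X)_{D_A}$, completing (2).
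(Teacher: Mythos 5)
Your proposal is correct and follows essentially the same route as the paper: checking multiplicative closedness and saturation of $D_A$ directly (a divisor of a unit is a unit), and then writing an arbitrary $f\in A$ as $\bigl(f\cdot(1+f^2)^{-1}\bigr)\big/\bigl(1\cdot(1+f^2)^{-1}\bigr)$ with numerator bounded by $\tfrac12$ and denominator in $D_A$ with inverse $1+f^2\in A$. The extra remarks you add (injectivity of $\varphi_{D_A}$ via $D_A\subset\Uu({\mathcal S}(X))$, and $D_A\subseteq D$) are fine but not needed beyond what the paper records.
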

\begin{proof} (1) The product of bounded semialgebraic functions is a bounded semialgebraic function too. In addition, the product of invertible elements of a unitary commutative ring is invertible too. Hence, $D_A$ is a multiplicatively closed subset of ${\mathcal S}^*(X)$. To see that it is saturated, let $f,g\in {\mathcal S}^*(X)$ such that $fg\in D_A$. In particular there exists $h\in A$ such that $(fg)\cdot h=1_A$. Therefore both $f$ and $g$ are units in $A$, so $f,g\in D_A$.

\noindent (2) The inclusion ${\mathcal S}^*(X)_{D_A}\subset A$ follows at once since $D_A\subset\Uu(A)$. Conversely, each $f\in A$ can be written as 
$$
f=(f\cdot(1+f^2)^{-1})/(1\cdot(1+f^2)^{-1})
$$
and $f\cdot(1+f^2)^{-1}\in{\mathcal S}^*(X)$, because its absolute value is upperly bounded by $1/2$, whereas $1\cdot(1+f^2)^{-1}\in D_A$ since its absolute value is upperly bounded by $1$, so it belongs to ${\mathcal S}^*(X)\subset A$, and it is a unit in $A$ whose inverse is $1+f^2\in A$. 
\end{proof}

Recall that a subset $D$ of a commutative and unitary ring $R$ is said to be \em multiplicatively closed \em if the unit element $1_R$ belongs to $D$, the zero element $0_R\notin D$ and given $d,e\in D$ then $ed\in D$. In addition, $D$ is said to be \em saturated \em if whenever $x,y\in R$ satisfy $xy\in D$ then $x,y\in D$.

\begin{thm}\label{loc1} Let $\Sat(X)$ be the family of saturated multiplicatively closed subsets $D$ of ${\mathcal S}^*(X)$ such that $\Zz_X(f)=\varnothing$ for every $f\in D$. Let ${\Inter}(X)$ be the family of intermediate $\R$-algebras between ${\mathcal S}^*(X)$ and ${\mathcal S}(X)$. Then the maps
$$
\Sat(X)\to{\Inter}(X),\, D\mapsto {\mathcal S}^*(X)_D\quad\&\quad{\Inter}(X)\to\Sat(X),\, A\mapsto D_A:=\Uu(A)\cap{\mathcal S}^*(X)
$$
are mutually inverse.
\end{thm}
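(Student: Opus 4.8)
The plan is to show the two maps are mutually inverse by checking both composites are the identity, using the two results already at hand: Proposition \ref{saturado} handles the composite $\Inter(X)\to\Sat(X)\to\Inter(X)$, since part (2) there says precisely that ${\mathcal S}^*(X)_{D_A}=A$, while part (1) guarantees that $D_A=\Uu(A)\cap{\mathcal S}^*(X)$ really lands in $\Sat(X)$ (it is saturated, multiplicatively closed, and every $f\in D_A$ is a unit in $A\subset{\mathcal S}(X)$, hence $\Zz_X(f)=\varnothing$). So the only remaining task is the other composite $\Sat(X)\to\Inter(X)\to\Sat(X)$: starting from $D\in\Sat(X)$, form $A:={\mathcal S}^*(X)_D$ (which, as recalled right before the theorem, is an intermediate $\R$-algebra because $\Zz_X(f)=\varnothing$ forces $D\subset\Uu({\mathcal S}(X))$), and then prove $D_A=D$, i.e. $\Uu({\mathcal S}^*(X)_D)\cap{\mathcal S}^*(X)=D$.

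The inclusion $D\subseteq D_A$ is immediate: the canonical map $\varphi_D:{\mathcal S}^*(X)\to{\mathcal S}^*(X)_D$ is injective (as $D$ contains no zero divisors, being made of units of ${\mathcal S}(X)$), every $d\in D$ becomes a unit in $A$, and $d\in{\mathcal S}^*(X)$, so $d\in\Uu(A)\cap{\mathcal S}^*(X)=D_A$. For the reverse inclusion $D_A\subseteq D$, take $f\in{\mathcal S}^*(X)$ that is a unit in $A={\mathcal S}^*(X)_D$; I must show $f\in D$. Being a unit in $A$ means $f\cdot(g/d)=1$ in $A$ for some $g\in{\mathcal S}^*(X)$, $d\in D$, i.e. (after clearing denominators, using injectivity of $\varphi_D$) there is $e\in D$ with $e\,(fg-d)=0$ in ${\mathcal S}^*(X)$; since $e$ is a non-zero-divisor this gives $fg=d\in D$. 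Now invoke saturation of $D$: $fg\in D$ implies $f\in D$ (and $g\in D$). This closes the loop.

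The main obstacle is essentially bookkeeping around the localization map rather than anything deep: one must be careful that the equality $f\cdot(g/d)=1_A$ in the ring of fractions unwinds to a genuine equality in ${\mathcal S}^*(X)$ after multiplying by a suitable element of $D$, and that the elements of $D$ are non-zero-divisors so these multiplications can be cancelled. All of this is provided by the generalities on localization recalled in Subsection \ref{localization1} together with the observation that $D\subset\Uu({\mathcal S}(X))$, so in fact ${\mathcal S}^*(X)_D$ sits inside ${\mathcal S}(X)$ and one may even argue directly inside ${\mathcal S}(X)$: there $f$ has an inverse $h\in{\mathcal S}(X)$, and writing $h=g/d$ with $g\in{\mathcal S}^*(X)$, $d\in D$ (possible since ${\mathcal S}(X)={\mathcal S}^*(X)_{D_0}$ for $D_0$ the set of all zero-free bounded functions, but more simply because $h\in A$ and one applies the normalization trick $h=(h(1+h^2)^{-1})/((1+h^2)^{-1})$ from the proof of Proposition \ref{saturado}) yields $fg=d\in D$ inside ${\mathcal S}^*(X)$, whence $f\in D$ by saturation. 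Either route is short; I would present the direct one to avoid fussing over kernels of $\varphi_D$.
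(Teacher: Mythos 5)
Your proof is correct, and its skeleton is the same as the paper's: Proposition \ref{saturado} disposes of the composite $\Inter(X)\to\Sat(X)\to\Inter(X)$, and the whole issue is the other composite, i.e.\ that $\Uu({\mathcal S}^*(X)_D)\cap{\mathcal S}^*(X)=D$ for $D\in\Sat(X)$. Where you differ is in how that remaining fact is justified. The paper simply cites the general saturation machinery of Atiyah--Macdonald (Exercise 3.7): the saturation $\ol{D}$ is the complement of the union of the prime ideals missing $D$, the elements of ${\mathcal S}^*(X)$ that become units in the localization are exactly $\ol{D}$, and $\ol{D}=D$ since $D$ is already saturated. You instead verify the same fact by hand: if $f\in{\mathcal S}^*(X)$ is invertible in $A={\mathcal S}^*(X)_D$, write its inverse as $g/d$ with $g\in{\mathcal S}^*(X)$, $d\in D$, clear denominators (legitimate because every element of $D$ has empty zeroset, hence is a non-zero-divisor, so $\varphi_D$ is injective and one may even argue pointwise inside ${\mathcal S}(X)$), obtain $fg=d\in D$, and conclude $f\in D$ by saturation. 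This buys a self-contained, elementary argument in place of the citation, at the cost of a few lines; the paper's route is shorter but leaves the reader to connect the prime-ideal characterization of $\ol{D}$ with the description of the units of the localization. One small caution about your parenthetical justifications for writing $h=g/d$ with $d\in D$: the normalization trick from Proposition \ref{saturado} produces the denominator $(1+h^2)^{-1}$, which lies in $D_A$ but need not lie in $D$ (and likewise $D_0$ need not be contained in $D$), so neither aside actually does the job; the correct and simplest reason is the one you also state, namely that $h\in A={\mathcal S}^*(X)_D$, so by the very definition of the ring of fractions $h=g\,d^{-1}$ with $g\in{\mathcal S}^*(X)$ and $d\in D$.
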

\begin{proof} The result is the immediate consequence of Proposition \ref{saturado} and the fact that each multiplicatively closed subset $D$ of a unitary and commutative ring $R$ admites a \em saturation, \em which is the smallest saturated multiplicatively closed subset of $R$ containing $D$. Indeed, see \cite[Exercise 3.7]{am}, the complement $\ol{D}$ of the union of those prime ideals of $R$ not meeting $D$ is a saturation of $D$ and the map $R_D\to R_{\ol{D}},\, rd^{-1}\mapsto rd^{-1}$ is a bijection. 
\end{proof}

As a consequence of Thm. \ref{loc1} we get an upper bound of the transcendence degree over $\R$ of the residue fields $\kappa_{\gtp}$ for $\gtp$ a prime ideal of $A$.

\begin{cor}\label{trdeg} Let $\gtp$ be a prime ideal of $A$ and let $\kappa_{\gtp}:=\qf(A/\gtp)$ be the field of fractions of $A/\gtp$. Then, the transcendence degree over $\R$ of $\kappa_{\gtp}$ is finite and upperly bounded by $\dim X$.
\end{cor}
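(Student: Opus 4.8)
The plan is to use the standing inclusion $A\subseteq{\mathcal S}(X)$, so that every element of $A$ is a (continuous) semialgebraic function on $X$, together with the classical fact that semialgebraic maps do not increase dimension. Put $d:=\dim X$ and suppose, for contradiction, that $\operatorname{trdeg}_\R\kappa_\gtp\geq d+1$. First I would push the algebraic independence from the fraction field down into $A$ itself: choose $t_1,\dots,t_{d+1}\in\kappa_\gtp=\qf(A/\gtp)$ algebraically independent over $\R$, write each $t_i$ as a quotient of residues of elements of $A$, and let $R_0$ be the $\R$-subalgebra of $A/\gtp$ generated by all these numerators and denominators. Then $R_0$ is a finitely generated $\R$-algebra which is a domain and satisfies $\operatorname{trdeg}_\R\qf(R_0)\geq d+1$; since in a finitely generated domain over a field a maximal algebraically independent subset of a generating set is a transcendence basis of the fraction field, $R_0$ already contains $d+1$ elements algebraically independent over $\R$. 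Lifting these to $A$ we obtain $f_1,\dots,f_{d+1}\in A\subseteq{\mathcal S}(X)$ whose residues modulo $\gtp$ are algebraically independent over $\R$.

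Next I would consider the semialgebraic map $F:=(f_1,\dots,f_{d+1})\colon X\to\R^{d+1}$. Its image $F(X)$ is a semialgebraic subset of $\R^{d+1}$ with $\dim F(X)\leq\dim X=d$, hence the Zariski closure of $F(X)$ is a proper algebraic subset of $\R^{d+1}$ and ${\mathcal J}(F(X))\neq(0)$. Pick $0\neq P\in{\mathcal J}(F(X))\subseteq\R[\y_1,\dots,\y_{d+1}]$; then $P(f_1,\dots,f_{d+1})$ vanishes identically on $X$, so it equals $0$ in ${\mathcal S}(X)$, and therefore in $A$. Reducing modulo $\gtp$ produces a nontrivial polynomial relation over $\R$ satisfied by the residues of $f_1,\dots,f_{d+1}$, contradicting their algebraic independence. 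This proves $\operatorname{trdeg}_\R\kappa_\gtp\leq\dim X$, which is in particular finite.

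I do not anticipate a serious obstacle. The two points needing a little care are the descent from algebraically independent elements of $\kappa_\gtp$ to such elements inside $A/\gtp$ (a standard fact about finitely generated algebras over a field) and the inequality $\dim F(X)\leq\dim X$, which is the essential geometric input and is classical for semialgebraic maps, see \cite{bcr}. One may alternatively route through Theorem \ref{loc1}: writing $A={\mathcal S}^*(X)_D$ and letting $\gtq:=\gtp^c$ be the contraction of $\gtp$ to ${\mathcal S}^*(X)$, one has $A/\gtp\cong({\mathcal S}^*(X)/\gtq)_{\overline{D}}$ and hence $\kappa_\gtp=\qf({\mathcal S}^*(X)/\gtq)$, so the same argument performed with bounded semialgebraic functions yields the bound as well; but the direct argument above is shorter.
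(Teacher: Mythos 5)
Your argument is correct, but it is not the route the paper takes. The paper's proof is a reduction: it sets $\gtq:=\gtp\cap{\mathcal S}^{*}(X)$, quotes the known bound (from the Krull-dimension paper cited as [FG3, Thm.~1.3]) that the transcendence degree of $\kappa_{\gtq}=\qf({\mathcal S}^{*}(X)/\gtq)$ over $\R$ is at most $\dim X$, and then uses Theorem~\ref{loc1} to see that every $f\in A$ is of the form $F/G$ with $F,G\in{\mathcal S}^{*}(X)$ and $G\notin\gtq$, so each residue class in $A/\gtp$ satisfies a degree-one polynomial over $\kappa_{\gtq}$; hence $\kappa_{\gtp}|\kappa_{\gtq}$ is algebraic and the bound transfers. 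You instead give a self-contained geometric proof: any $d+1$ elements of $A\subset{\mathcal S}(X)$ define a semialgebraic map $X\to\R^{d+1}$ whose image has dimension at most $d=\dim X$, so a nonzero polynomial vanishes on it, and reducing the resulting identity modulo $\gtp$ kills any alleged algebraic independence; the descent from $\kappa_{\gtp}$ to $A/\gtp$ via a finitely generated subalgebra is standard and fine. What your approach buys is independence from the cited result and from the ring-of-fractions description (indeed it works verbatim for any $\R$-subalgebra of ${\mathcal S}(X)$, not only intermediate ones, and in effect re-proves the key input of [FG3]); what the paper's approach buys is brevity and thematic consistency, since the whole section is about harvesting consequences of the identification $A={\mathcal S}^{*}(X)_{D_A}$ -- your closing remark that one could route through Theorem~\ref{loc1} and $\kappa_{\gtp}=\qf({\mathcal S}^{*}(X)/\gtq)$ is essentially the paper's proof.
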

\begin{proof} Let $\gtq:=\gtp\cap{\mathcal S}^*(X)$. It was proved in \cite[Thm.1.3]{fg1} that the transcendence degree over $\R$ of $\kappa_{\gtq}:=\qf({\mathcal S}^*(X)/\gtq)$ is finite and upperly bounded by $\dim X$. Thus, all reduces to check that the field extension $\kappa_{\gtp}|\kappa_{\gtq}$ is algebraic. It suffices to see that each element of $A/\gtp$ is algebraic over $\kappa_{\gtq}$. Let $f\in A$ and $\xi:=p+\gtp$. With the notations in Thm. \ref{loc1} there exist $F,G\in{\mathcal S}^*(X)$ such that $f:=F/G$ where $G\notin D_A$, i.e., $G$ is not a unit in $A$. Note that $f$ is a root of the polynomial $G\cdot{\tt t}-F\in{\mathcal S}^*(X)[{\tt t}]$. Since $\gtp$ is a proper ideal of $A$ the intersection $\gtq\cap\Uu(A)$ is empty, so $G\notin\gtq$ and $\xi$ is a root of the polynomial $(G+\gtq)\cdot {\tt t}-(f+\gtq)\in\kappa_{\gtq}[{\tt t}]$.
\end{proof}

Next we apply Thm. \ref{loc1} to compute the Krull dimension of $A$. For each point $x\in X$ we denote $\gtm_{A;x}:=\{f\in A: f(x)=0\}$. It is a maximal ideal of $A$ because it is the kernel of the surjective homomorphism $A\to\R.\, f\mapsto f(x)$. We shall abreviate $\gtm^{*}:=\gtm_{{\mathcal S}^*(X);x}$.

\begin{prop}\label{krull} The Krull dimension $\dim(A)$ of $A$ equals $\dim(X)$. 
\end{prop}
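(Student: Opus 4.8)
The plan is to sandwich $\dim(A)$ between $\dim(X)$ from below and $\dim(X)$ from above, using that $A$ is a ring of fractions of ${\mathcal S}^*(X)$ together with the known value of $\dim({\mathcal S}^*(X))$. First recall from \cite{fg1} (or the companion papers cited) that $\dim({\mathcal S}^*(X))=\dim(X)$ and $\dim({\mathcal S}(X))=\dim(X)$; these will be the external facts I lean on. By Theorem \ref{loc1} and Proposition \ref{saturado} we may write $A={\mathcal S}^*(X)_{D_A}$ with $D_A$ a saturated multiplicatively closed subset of ${\mathcal S}^*(X)$ consisting of functions with empty zeroset. The localization description from Subsection \ref{localization1} then gives a homeomorphism between $\Spec(A)$ and the subspace $\{\gtq\in\Spec({\mathcal S}^*(X)):\gtq\cap D_A=\varnothing\}$ of $\Spec({\mathcal S}^*(X))$, and this homeomorphism is order-preserving for inclusion (contraction of a chain of primes is a chain of primes, and $\gtb\mapsto\gtb^c$, $\gta\mapsto\gta^e$ are mutually inverse on the relevant subsets). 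Consequently any chain of primes in $A$ pulls back to a chain of the same length in ${\mathcal S}^*(X)$, so $\dim(A)\le\dim({\mathcal S}^*(X))=\dim(X)$; and symmetrically, since $A\subseteq{\mathcal S}(X)$ and ${\mathcal S}(X)={\mathcal S}^*(X)_D$ for the saturated multiplicative set $D$ of all functions with empty zeroset (which contains $D_A$), localizing $A$ further at $D$ gives ${\mathcal S}(X)$, hence $\dim(A)\ge\dim({\mathcal S}(X))=\dim(X)$ by the same order-preserving-homeomorphism argument applied to the localization map $A\to{\mathcal S}(X)$.

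More carefully for the lower bound: I would take a maximal chain of prime ideals $\gtp_0\subsetneq\gtp_1\subsetneq\cdots\subsetneq\gtp_d$ in ${\mathcal S}(X)$ with $d=\dim(X)$, contract it along the inclusion $A\hookrightarrow{\mathcal S}(X)$ (equivalently along $\varphi_D$ after identifying ${\mathcal S}(X)$ with $A_D$), obtaining primes $\gtq_i:=\gtp_i\cap A$ of $A$; these form a chain, and they are strictly increasing because each $\gtp_i$ equals its own re-extension $\gtq_i^{\,e}$ (the contraction–extension identity $(\gtb^c)^e=\gtb$ for proper ideals of a localization, recorded in Subsection \ref{localization1}). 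Hence $\dim(A)\ge d=\dim(X)$. The upper bound is the mirror image with the roles of $A$ and ${\mathcal S}^*(X)$ interchanged: $\Spec(A)$ embeds as a subspace of $\Spec({\mathcal S}^*(X))$ via contraction along $\varphi_{D_A}$, chains go to chains of equal length, so $\dim(A)\le\dim({\mathcal S}^*(X))=\dim(X)$.

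I do not expect a serious obstacle here; the one point that needs a little care is making sure the two inequalities genuinely use the localization bijection in the correct direction — that contraction of primes along $R\to R_D$ is injective on the subset $\{\gtq:\gtq\cap D=\varnothing\}$ and order-reflecting, so that strict inclusions are preserved both ways. This is exactly the homeomorphism $\{\gtp\in\Spec(R):\gtp\cap D=\varnothing\}\cong\Spec(R_D)$ quoted in Subsection \ref{localization1}, together with the elementary fact that it and its inverse respect inclusion. The only genuinely imported ingredients are $\dim({\mathcal S}^*(X))=\dim({\mathcal S}(X))=\dim(X)$ from the earlier literature on rings of semialgebraic functions; granting those, the proof is a short formal argument about spectra of localizations.
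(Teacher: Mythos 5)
Your argument is correct, and the upper bound is exactly the paper's: identify $A={\mathcal S}^*(X)_{D_A}$ via Theorem \ref{loc1}, embed $\Spec(A)$ into $\Spec({\mathcal S}^*(X))$ by contraction, and invoke $\dim({\mathcal S}^*(X))=\dim(X)$. Where you diverge is the lower bound. The paper stays inside ${\mathcal S}^*(X)$: it picks a point $x\in X$ of maximal local dimension $d$, uses the height theorem of \cite{fg1} to get a chain $\gtp_0\subsetneq\cdots\subsetneq\gtp_d=\gtm_x^*$ in ${\mathcal S}^*(X)$, checks that $\gtm_x^*\cap\Uu(A)=\varnothing$ (evaluation at $x$ kills any would-be inverse), so the whole chain misses $D_A$ and extends to a strict chain in $A$ ending at $\gtm_{A;x}$. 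You instead go upward: since $D:=\{f\in{\mathcal S}^*(X):\Zz_X(f)=\varnothing\}$ consists of units of ${\mathcal S}(X)$ and ${\mathcal S}^*(X)_D={\mathcal S}(X)$, one gets $A_D={\mathcal S}(X)$, and contracting a maximal chain of ${\mathcal S}(X)$ along $A\hookrightarrow{\mathcal S}(X)$ preserves strict inclusions by $(\gtb^c)^e=\gtb$, yielding $\dim(A)\ge\dim({\mathcal S}(X))=\dim(X)$. Both routes are legitimate; the trade-off is in the imported input. Your version needs the equality $\dim({\mathcal S}(X))=\dim(X)$ from \cite{fg1} and requires no avoidance condition at all (contraction always works), which makes it formally cleaner; the paper's version only needs $\dim({\mathcal S}^*(X))=\dim(X)$ plus the finer statement $\hgt(\gtm_x^*)=d$ at a point of maximal local dimension, and as a by-product it exhibits the maximal chain of $A$ sitting under the fixed maximal ideal $\gtm_{A;x}$, which is in the spirit of the later discussion of fixed ideals. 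Your only implicit step worth making explicit is that elements of $D$ are non-zero-divisors (empty zeroset), so $A\to A_D$ is injective and contraction is literally intersection with $A$; this is immediate and not a gap.
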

\begin{proof} By Theorem \ref{loc1}, $A={\mathcal S}^*(X)_{D_A}$ where $D_A:=\Uu(A)\cap{\mathcal S}^*(X)$. In addition, the map 
$$
\Spec(A)\hookrightarrow\Spec({\mathcal S}^*(X)),\, \gtp\mapsto\gtp\cap{\mathcal S}^*(X) 
$$ 
induced by the inclusion of ${\mathcal S}^*(X)$ into $A$ is a homeomorphism onto its image, that is the set of prime ideals $\gtp$ of ${\mathcal S}^*(X)$ such that $\gtp\cap D_A=\varnothing$. Thus $\dim(A)\leq d:=\dim({\mathcal S}^*(X))=\dim(X)$, where the last equality was proved in \cite[Thm. 1.1]{fg1}.

To prove the equality notice that, by \cite[2.8.12]{bcr} there exists a point $x\in X$ such that the local dimension of $X$ at $x$ equals $d$. By \cite[Thm.1.2]{fg1} we have $\hgt(\gtm^{*}_x)=d$, where $\hgt(\gta)$ means the height of an ideal $\gta$. Let
$$
\gtp_0\subsetneq\gtp_1\subsetneq\cdots\subsetneq\gtp_d:=\gtm_x^{*}
$$
be a chain of prime ideals in ${\mathcal S}^*(X)$. Notice that $\Uu(A)\cap\gtm_x^{*}=\varnothing$. Otherwise pick $f\in\Uu(A)\cap\gtm_x^{*}$. Then $f(x)=0$ and there would exist $g\in A$ such that $f\cdot g=1_A$. Then $1=f(x)\cdot g(x)=0$, a contradiction. As the map 
$$
\{\gtp\in\Spec({\mathcal S}^*(X)):\gtp\cap D_A=\varnothing\}\to\Spec(A),\, \gtp\mapsto \gtp^{e}
$$
is injective and inclusion-preserving we have $\gtp_0^{e}\subsetneq\gtp_1^{e}\subsetneq\cdots\subsetneq\gtp_d^{e}:=\gtm_{A;x}$.Thus $d\leq\dim(A)$ and so $\dim(A)=\dim(X)$.
\end{proof}

\subsection{Main properties of the spectra of intermediate $\R$-algebras}\label{s31}

``Convexity'' is an ubiquitous condition in Real Geometry. This implies that the set of prime ideals containing a given prime ideal of $A$ form a chain and, in particular, $A$ is a Gelfand ring. Let us collect now the properties that will be useful in the sequel.

\begin{cor}[Convexity I]\label{convex1} \em (1) \em Every radical ideal $\gta\in{\mathcal S}^*(X)$ is \em absolutely convex, \em i.e, given $f,g\in{\mathcal S}^*(X)$ such that $|f(x)|\leq|g(x)|$ for every $x\in X$ and $g\in\gta$, then $f\in\gta$.

\noindent \em (2) \em A prime ideal $\gtp$ of $A$ is maximal if and only if $\gtq:=\gtp\cap{\mathcal S}^*(X)$ is a maximal ideal of ${\mathcal S}^*(X)$. 

\noindent \em (3) \em Every prime ideal $\gtp\in A$ is absolutely convex.

\noindent \em (4) \em The set of prime ideals of $A$ containing a prime ideal $\gtp$ of $A$ is a totally ordered set by inclusion. In particular $A$ is a Gelfand ring, i.e., each prime ideal of $A$ is contained in a unique maximal ideal of $A$.

\noindent \em (5) \em The subspace $\Max(A)$ of $\Spec(A)$ consisting on the maximal ideals of $A$ endowed with the Zariski topology is compact and Hausdorff. In addition, the map $\varphi:X\to\Max(A),\, x\mapsto\gtm_{A;x}$ is a bijection onto a dense subspace of $\Max(A)$. 
\end{cor}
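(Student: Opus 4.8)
The plan is to prove the five parts of Corollary~\ref{convex1} essentially in the order stated, using Theorem~\ref{loc1} to transfer absolute convexity from ${\mathcal S}^*(X)$ to $A$, and then extracting the Gelfand property and the description of $\Max(A)$ as formal consequences.

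First, for part~(1), I would recall (or invoke directly from \cite{fg3}, \cite{fg1}) that radical ideals of ${\mathcal S}^*(X)$ are absolutely convex; the standard argument is that if $|f|\le|g|$ pointwise with $g\in\gta$, then on $\Zz_X(g)$ one has $f=0$, while the bounded semialgebraic function $h$ defined by $h:=f/g$ on $X\setminus\Zz_X(g)$ and $h:=0$ on $\Zz_X(g)$ is continuous (by the \L ojasiewicz inequality, or by a direct semialgebraic curve-selection argument), so $f=hg\in\gta$. Since $\gta$ radical gives $f^2=h^2g^2\in\gta$ hence $f\in\gta$, one does not even need $h$ bounded if $\gta$ is only radical rather than absolutely convex a priori; I will state whichever version matches the cited lemma. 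For part~(2), the nontrivial implication is that if $\gtq=\gtp\cap{\mathcal S}^*(X)$ is maximal then $\gtp$ is maximal: by Theorem~\ref{loc1} and the homeomorphism $\Spec(A)\hookrightarrow\Spec({\mathcal S}^*(X))$, $\gtp\mapsto\gtp\cap{\mathcal S}^*(X)$, onto $\{\gtq':\gtq'\cap D_A=\varnothing\}$, a prime of $A$ strictly containing $\gtp$ would contract to a prime strictly containing $\gtq$, contradicting maximality of $\gtq$; conversely if $\gtp$ is maximal in $A$ then since the contraction map is inclusion-preserving and $A={\mathcal S}^*(X)_{D_A}$ is a localization, $\gtq=\gtp^c$ is maximal among primes missing $D_A$, and because ${\mathcal S}^*(X)$ is a Gelfand-type ring (every prime lies in a unique maximal ideal, cf. the convexity of radical ideals) $\gtq$ is forced to be actually maximal — I would phrase this carefully, perhaps citing the known fact that in ${\mathcal S}^*(X)$ every prime is contained in a unique maximal ideal.

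For part~(3), I would argue that any prime $\gtp$ of $A$ is absolutely convex: given $f,g\in{\mathcal S}^*(X)$ with $|f|\le|g|$ pointwise and $g\in\gtp$, then $g\in\gtp\cap{\mathcal S}^*(X)=\gtq$, which is a radical (indeed prime) ideal of ${\mathcal S}^*(X)$, so by part~(1) $f\in\gtq\subset\gtp$. Strictly speaking part~(3) as stated only tests $f,g\in{\mathcal S}^*(X)$, so this is immediate; if one wanted the statement for $f,g\in A$ one would first reduce $f=F/G$, $g=G'/G''$ to bounded representatives sharing a common denominator in $D_A$ and use that units can be absorbed, but the stated form needs only part~(1). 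Part~(4) then follows from absolute convexity by the classical chain argument: if $\gtp\subseteq\gtp_1$ and $\gtp\subseteq\gtp_2$ are primes of $A$, suppose $f\in\gtp_1\setminus\gtp_2$ and $g\in\gtp_2\setminus\gtp_1$; replacing $f,g$ by $f/(1+f^2)$ and $g/(1+g^2)$ (units in $A$ times bounded functions, so WLOG $f,g\in{\mathcal S}^*(X)$ after absorbing units), consider $\min(|f|,|g|)$ and $\max(|f|,|g|)$: the function $|fg|=\min(|f|,|g|)\cdot\max(|f|,|g|)$ lies in both $\gtp_1$ and $\gtp_2$; but one can produce bounded semialgebraic $u,v$ with $|u|\le|fg-g^2|$-type estimates forcing $g\in\gtp_1$ — the cleanest route is: since $\gtp_1$ is absolutely convex and $|f(f-g)|\le (fg)\cdot(\text{something})$... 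I will instead use the standard trick that in an absolutely convex prime ideal, $fg\in\gtp_1$ with the convexity of $\gtp_2$ forces comparability; concretely, from $|f|\wedge|g|$ and $|f|\vee|g|$ one shows $|f|\le |f|\vee|g| = $ (a multiple of a generator), and the contradiction emerges because $f^2-|f|(|f|\vee|g|)$ and similar combinations land in the ideals. I expect this is the delicate bookkeeping step and I would model it on the proof for ${\mathcal C}(X)$ in \cite{gj} or \cite{dgm}. The Gelfand conclusion is then automatic: a prime contained in two maximal ideals would, with those two maximal ideals, violate the chain property of part~(4).

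Finally, for part~(5): $\Max(A)$ is compact as noted in Subsection~\ref{localization1} (any cover by basic opens $\Dd_A(f)$ covers all of $\Spec(A)$, which is compact). Hausdorffness follows from the Gelfand property of part~(4): in a Gelfand ring the maximal spectrum with the Zariski topology is Hausdorff — one shows that distinct maximal ideals $\gtm_1\ne\gtm_2$ can be separated by $\Dd_A(f)$ and $\Dd_A(g)$ where $f\in\gtm_2\setminus\gtm_1$, $g\in\gtm_1\setminus\gtm_2$ and $fg$ lies in every maximal ideal hence in the Jacobson radical, so no maximal ideal contains both complements; the standard argument (e.g. in Johnstone's \emph{Stone Spaces}, or deducible from \cite[4.9]{gj}) applies verbatim once convexity gives that the hull of $fg$ exhausts $\Max(A)$. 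For the map $\varphi:X\to\Max(A)$, $x\mapsto\gtm_{A;x}$: it is well-defined into $\Max(A)$ since $\gtm_{A;x}$ is the kernel of evaluation at $x$; injectivity follows because $X$ is separated by ${\mathcal S}^*(X)\subset A$ (coordinate functions composed with arctangent, say, separate points of $\R^n$); continuity is clear since $\varphi^{-1}(\Dd_A(f))=\{x: f(x)\ne 0\}$ is open in $X$; and density follows from the corresponding density of $X$ in $\betaa X=\Max({\mathcal S}^*(X))$ together with the homeomorphism $\Spec(A)\cong\{\gtp: \gtp\cap D_A=\varnothing\}\subset\Spec({\mathcal S}^*(X))$ restricted to maximal spectra — a closed subset of $\Max(A)$ disjoint from $\varphi(X)$ would pull back to a closed subset of $\betaa X$ disjoint from $X$, hence empty. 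I anticipate part~(4) (the comparability/chain argument) is the genuine content and the main obstacle; the rest is bookkeeping and invocation of Theorem~\ref{loc1} plus standard Gelfand-ring topology.
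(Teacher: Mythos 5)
Your plan correctly identifies the overall strategy (use Theorem~\ref{loc1} to move back and forth between $\Spec(A)$ and the primes of ${\mathcal S}^*(X)$ avoiding $D_A$), but the part you yourself single out as ``the genuine content'', part~(4), is exactly the part you do not prove: you only promise to adapt the $\min/\max$ chain argument from \cite{gj}/\cite{dgm}, and the fragment you sketch (``$|u|\le|fg-g^2|$-type estimates'') is not an argument. The paper needs none of that bookkeeping: it contracts to ${\mathcal S}^*(X)$, where the chain property is already known (\cite[3.1.4]{fg3}: the primes of ${\mathcal S}^*(X)$ containing a given prime form a chain), and transfers it back through the inclusion-preserving correspondence $\gtp\mapsto\gtp\cap{\mathcal S}^*(X)$, $\gtq\mapsto\gtq^{e}$; if $\gtp\subset\gtp_1,\gtp_2$ then $\gtq_1,\gtq_2$ both contain $\gtq:=\gtp\cap{\mathcal S}^*(X)$, hence are comparable, hence so are $\gtp_i=\gtq_i^{e}$. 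The same ``divide into ${\mathcal S}^*(X)$, apply the known result, multiply back'' template is what the paper uses for (3), and note that (3) really is meant for $f,g\in A$ (it is applied later to unbounded functions), so the reduction you treat as optional --- done in the paper by dividing by $1+g^2$ --- is the actual content of that item. Also, in your direct sketch of (1), $f/g$ extended by $0$ on $\Zz_X(g)$ is \emph{not} continuous in general (take $f=g$ with nonempty zero set); the working elementary argument uses $f^2/g$, or simply $f^2\le g^2$ together with \cite[3.1.2]{fg3}, which is the route the paper takes.

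Two further steps would fail as written. In (2), your ``only if'' direction --- $\gtq$ is maximal among primes missing $D_A$ and lies under a unique maximal ideal of ${\mathcal S}^*(X)$, hence is maximal --- is a non sequitur: the unique maximal ideal $\gtm^*\supset\gtq$ may meet $D_A$, in which case it does not survive in the localization and nothing prevents $\gtq\subsetneq\gtm^*$. Your unease here is justified: for $A={\mathcal S}(X)$ with $X$ non-compact a free maximal ideal contracts to a non-maximal prime of ${\mathcal S}^*(X)$ (this is precisely the phenomenon behind Theorem~\ref{freefixed} and Proposition~\ref{arquimedianO}(3)), so only the ``if'' direction has the clean localization proof, and any proof of the converse direction --- including the paper's own, which tacitly extends a maximal ideal of ${\mathcal S}^*(X)$ properly containing $\gtq$ to a proper ideal of $A$ --- must justify $\gtm^*\cap D_A=\varnothing$, which you do not do. In (5), your density argument is circular: it uses a map $\Max(A)\to\betaa X$ and a ``restriction to maximal spectra'' of the spectral homeomorphism, but that homeomorphism does not restrict to maximal spectra (again because contractions of maximal ideals of $A$ need not be maximal), and the continuous surjection $\Max(A)\to\betaa X$ is only constructed afterwards, in Theorem~\ref{stone}, whose proof relies on this corollary. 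The paper's density argument is elementary and avoids all of this: if $\Dd_A(f)\neq\varnothing$ then $f\not\equiv0$, so there is $x\in X$ with $f(x)\neq0$ and $\gtm_{A;x}\in\Dd_A(f)\cap\varphi(X)$. Your treatment of compactness, Hausdorffness via the Gelfand property (the paper cites \cite{dmo}) and injectivity of $\varphi$ is fine.
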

\begin{proof} (1) Note that $|g|^2=g^2\in\gta$ because $g\in\gta$. As $f(x)^2=|f(x)|^2\leq|g(x)|^2$ for every $x\in X$ it follows from \cite[3.1.2]{fg3} that $f^2\in\gta$. Since $\gta$ is a radical ideal, it follows that $f\in\gta$. 

\noindent (2)  It follows from \cite[Exercise 3.21]{am} that the map
$$
{\tt i}_A:\Spec(A)\to\Spec({\mathcal S}^*(X)),\, \gtq\to\gtq\cap{\mathcal S}^*(X)
$$
is a homeomorphism onto its image ${\tt i}_A(A)$, that is the set of prime ideals $\gtq\in\Spec({\mathcal S}^*(X))$ such that $\gtq\cap D_A=\varnothing$. Its inverse is the map
$$
{\tt i}_A(\Spec(A))\to\Spec(A),\, \gtp\mapsto\gtp^{e}.
$$
Suppose that $\gtp$ is maximal but $\gtq$ is not maximal. Let $\gtm_A$ be a maximal ideal of $A$ containing properly the ideal $\gtq$. Then $\gtp=(\gtp^{c})^{e}=\gtq^{e}\subsetneq(\gtm_A^{c})^{e}=\gtm_A$, a contradiction. Conversely, suppose that $\gtq$ is maximal but $\gtp$ is not maximal. Let $\gtm$ be a maximal ideal of ${\mathcal S}^*(X)$ containing properly the ideal $\gtp$. Then, $\gtq=\gtp\cap{\mathcal S}^*(X)\subsetneq\gtm\cap{\mathcal S}^*(X)$ and $\gtq$ is not maximal.

\noindent (3) Let $f,g\in A$ such that $|f(x)|\leq|g(x)|$ for every $x\in X$ and $g\in\gtp$. Let $\gtq:=\gtp\cap{\mathcal S}^*(X)$. Note that the semialgebraic functions $(1+g^2)^{-1}$, $f\cdot(1+g^2)^{-1}$ and $g\cdot(1+g^2)^{-1}$ are bounded and
$$
\frac{|f(x)|}{1+g(x)^2}\leq\frac{|g(x)|}{1+g(x)^2}\, \text{ for every } x\in X.
$$
In addition $g\cdot(1+g^2)^{-1}\in\gtq$ because $g\in\gtp$. As $\gtq$ is a radical ideal of ${\mathcal S}^*(X)$ it is absolutely convex, by part (1). Thus $h:=f\cdot(1+g^2)^{-1}\in\gtq$. Therefore $f=(1+g^2)\cdot h\in\gtp$.

\noindent (4) Let $\gtp_1$ and $\gtp_2$ be two prime ideals of $A$ containing $\gtp$. Let $\gtq_i:=\gtp_i\cap{\mathcal S}^*(X)$ for $i=1,2$ and let $\gtq:=\gtp\cap{\mathcal S}^*(X)$. Both $\gtq_1$ and $\gtq_2$ are prime ideals of ${\mathcal S}^*(X)$ containing the prime ideal $\gtq$ of ${\mathcal S}^*(X)$. Since the set of prime ideals of ${\mathcal S}^*(X)$ containing $\gtq$ is totally ordered, see \cite[3.1.4]{fg3} we can assume that $\gtq_1\subset\gtq_2$, and so $\gtp_1=\gtq_1^{e}\subset\gtq_2^{e}=\gtp_2$.

\noindent (5) It was proved in \cite{dmo} that the maximal spectrum of a Gelfand ring is a Hausdorff space, whereas  we recalled in \ref {localization1} that $\Max(A)$ is a compact space. To prove that  $\varphi(X)$ is a dense subset of $\Max(A)$ let $f\in A$ be such that the basic open subset $\Dd_A(f)$ is nonempty. Thus $f\not\equiv0$ and it exists a point $x\in X$ such that $f(x)\neq0$, that is, $\gtm_{A;x}\in\Dd_A(f)$, i.e., $\Dd_A(f)\cap\varphi(X)\neq\varnothing$. Notice that $\gtm_{A;x}$ is a maximal ideal of $A$ because it is the kernel of the surjective homomorphism $A\to\R,\, f\mapsto f(x)$.

To finish we must show that $\varphi$ is injective. Let $x,y\in X$ with $x\neq y$. The bounded semialgebraic function
$$
f:X\to\R,\, z\mapsto \frac{\|z-x\|^2}{1+\|z-x\|^2}
$$
satisfies $f(x)=0$ and $f(y)\neq0$. Thus $f\in\gtm_{A;x}\setminus\gtm_{A;y}$, so $\gtm_{A;x}\neq\gtm_{A;y}$.
\end{proof}

As a consequence we show now that $\Max(A)$ is a model of the semialgebraic Stone-\v{C}ech compactification of $X$.

\begin{thm}\label{stone} Let $X$ be a semialgebraic set and let $A$ be an intermediate algebra between ${\mathcal S}^{*}(X)$ and ${\mathcal S}(X)$. The map $\rho_A:\Max(A)\to\betaa X$ that maps each maximal ideal $\gtn$ of $A$ to the unique maximal ideal of ${\mathcal S}^*(X)$ containing the prime ideal $\gtn\cap{\mathcal S}^*(X)$ of ${\mathcal S}^*(X)$ is a homeomorphism.
\end{thm}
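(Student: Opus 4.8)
The plan is to realise $\rho_A$ as a continuous injection between compact Hausdorff spaces and then to obtain surjectivity from the fact, recalled in \ref{stone1}(5), that $\betaa X$ is the \emph{smallest} semialgebraically complete compactification of $X$; so the only real work is that last point.

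First I would collect the structural facts. The space $\betaa X=\Max({\mathcal S}^*(X))$ is compact (see \ref{localization1}) and, since ${\mathcal S}^*(X)$ is itself an intermediate algebra and hence a Gelfand ring by Corollary \ref{convex1}(4), it is Hausdorff by the argument of Corollary \ref{convex1}(5). By Corollary \ref{convex1}(2), for each maximal ideal $\gtn$ of $A$ the contraction $\gtn\cap{\mathcal S}^*(X)$ is already maximal in ${\mathcal S}^*(X)$, so the unique maximal ideal of ${\mathcal S}^*(X)$ containing it is $\gtn\cap{\mathcal S}^*(X)$ itself; hence $\rho_A$ is nothing but the restriction to $\Max(A)$ of the spectral map ${\tt i}_A:\Spec(A)\to\Spec({\mathcal S}^*(X))$, $\gtp\mapsto\gtp\cap{\mathcal S}^*(X)$, of Corollary \ref{convex1}(2). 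Thus $\rho_A$ is continuous, and injective since ${\tt i}_A$ is a homeomorphism onto its image. As $\Max(A)$ is also compact and Hausdorff by Corollary \ref{convex1}(5), any continuous bijection $\Max(A)\to\betaa X$ is automatically a homeomorphism, so everything reduces to proving $\rho_A$ onto.

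For surjectivity I would show that $(\Max(A),\varphi)$, with $\varphi:X\to\Max(A)$, $x\mapsto\gtm_{A;x}$, is a semialgebraically complete compactification of $X$. It is a compactification by Corollary \ref{convex1}(5), so the point is to extend each $f\in{\mathcal S}^*(X)$ continuously along $\varphi$. Given such an $f$, take the continuous extension $\widehat f:\betaa X\to\R$ of \ref{stone1}(4) (so $\widehat f\circ{\tt j}=f$) and set $F:=\widehat f\circ\rho_A:\Max(A)\to\R$; then $F$ is continuous and, since $\rho_A(\varphi(x))=\gtm_{A;x}\cap{\mathcal S}^*(X)=\gtm^{*}_x={\tt j}(x)$ for every $x\in X$, one gets $F\circ\varphi=\widehat f\circ{\tt j}=f$. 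Hence $\Max(A)$ is semialgebraically complete, and by \ref{stone1}(5) there is a continuous surjection $\sigma:\Max(A)\to\betaa X$ with $\sigma\circ\varphi={\tt j}$, which is the only continuous map with that property because $\varphi(X)$ is dense in $\Max(A)$. Since $\rho_A$ is continuous and also satisfies $\rho_A\circ\varphi={\tt j}$, uniqueness forces $\sigma=\rho_A$, so $\rho_A$ is onto; together with the previous paragraph this makes $\rho_A$ a homeomorphism.

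The hard part is exactly this surjectivity: it says that no maximal ideal of ${\mathcal S}^*(X)$ is lost on passing to the larger ring $A$, and routing it through the minimality of $\betaa X$ spares one a direct analysis of how the maximal ideals of ${\mathcal S}^*(X)$ lie over $A$ relative to the multiplicative set $D_A$. Apart from that, the only things needing care are the compatibility $\rho_A\circ\varphi={\tt j}$ and the continuity of the extensions $F$, both of which are immediate once one knows $\rho_A={\tt i}_A|_{\Max(A)}$.
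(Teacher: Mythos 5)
Your reduction collapses at its first step: the claim that the contraction $\gtn\cap{\mathcal S}^*(X)$ of a maximal ideal $\gtn$ of $A$ is again maximal, so that $\rho_A$ is just the restriction of ${\tt i}_A$ to $\Max(A)$. Whatever Corollary \ref{convex1}(2) literally says, its forward implication cannot hold in this generality and cannot carry the weight you put on it: by Proposition \ref{arquimedianO}(3) the contraction of $\gtn$ is maximal exactly when $\gtn$ is archimedean, and by Theorem \ref{freefixed} this happens exactly for fixed maximal ideals, while free maximal ideals exist as soon as $X$ is not compact (Remark \ref{arquimediano}(2)). Concretely, for $A={\mathcal S}(\R)$ and the free maximal ideal $\gtm$ at $+\infty$, the field ${\mathcal S}(\R)/\gtm$ is a non-archimedean real closed extension of $\R$, the image of ${\mathcal S}^*(\R)$ in it is a proper convex subring and not a field, so $\gtm\cap{\mathcal S}^*(\R)$ is a non-maximal prime strictly contained in $\gtm^*_{+\infty}$. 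Worse, your two halves contradict each other: the image of ${\tt i}_A|_{\Max(A)}$ consists of primes of ${\mathcal S}^*(X)$ disjoint from $D_A$, so if $\rho_A$ were this restriction, surjectivity onto $\betaa X$ would force every element of $D_A$ to avoid all maximal ideals of ${\mathcal S}^*(X)$, i.e.\ to be a unit of ${\mathcal S}^*(X)$, giving $A={\mathcal S}^*(X)$; in the example above $\gtm^*_{+\infty}$ contains the unit $1/(1+\x^2)$ of $A$ and is not in the image of ${\tt i}_A$ at all. Once the false identification is removed, both continuity and injectivity of $\rho_A$ are unsupported. Continuity is easy to repair: $\rho_A$ is ${\tt i}_A|_{\Max(A)}$ followed by the Gelfand retraction $\Spec({\mathcal S}^*(X))\to\betaa X$, continuous by \cite[1.2]{dmo} (this is how the paper argues). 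Injectivity is the genuinely hard point and you have no argument for it: distinct maximal ideals of $A$ do contract to distinct primes of ${\mathcal S}^*(X)$, but two incomparable primes can lie inside one and the same maximal ideal, so injectivity of ${\tt i}_A$ does not give injectivity of $\rho_A$. The paper settles this by choosing $f_1\in\gtm_1$, $f_2\in\gtm_2$ with $f_1+f_2=2$ and producing the bounded functions $F_i=f_i^2/(f_1^2+f_2^2)\in\gtm_i\cap{\mathcal S}^*(X)$ with $F_1+F_2=1$, so the two contractions cannot be contained in a common maximal ideal of ${\mathcal S}^*(X)$.

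On the positive side, your surjectivity argument is correct and genuinely different from the paper's. Once continuity of $\rho_A$ is obtained as above, $F:=\widehat f\circ\rho_A$ together with $\rho_A\circ\varphi={\tt j}$ does exhibit $(\Max(A),\varphi)$ as a semialgebraically complete compactification (you should add the one-line check that $\varphi$ is continuous, since $\varphi^{-1}(\Dd_A(f)\cap\Max(A))=X\setminus\Zz_X(f)$; Corollary \ref{convex1}(5) only states it is a bijection onto a dense subset), and then minimality \ref{stone1}(5), density of $\varphi(X)$ and Hausdorffness of $\betaa X$ identify the dominating surjection with $\rho_A$. The paper gets surjectivity more cheaply: $\rho_A$ is a closed map between compact Hausdorff spaces whose image contains the dense set $\{\gtm^*_x:x\in X\}$. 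So your assessment that "the only real work is surjectivity" is exactly backwards: surjectivity is the soft half, and the injectivity argument is the half missing from your proposal.
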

\begin{proof} The map $\rho_A$ is the composition of the inmersions 
$$
\Max(A)\hookrightarrow\Spec(A)\, \text{  and }\, \Spec(A)\hookrightarrow\Spec({\mathcal S}^{*}(X)),\, \gtp\mapsto\gtp\cap{\mathcal S}^{*}(X)
$$
with the retraction $\rho_X^{*}:\Spec({\mathcal S}^{*}(X))\to\betaa X$ that maps each prime ideal of ${\mathcal S}^{*}(X)$ to the unique maximal ideal of ${\mathcal S}^{*}(X)$ containing it (recall that ${\mathcal S}^{*}(X)$ is a Gelfand ring). Indeed it follows from  \cite[1.2]{dmo} that $\rho_X^{*}$ is continuous and so $\rho_A$ is continuous too. Consequently, since $\Max(A)$ and $\betaa X$ are compact and Hausdorff, $\rho_A$ is a closed map, and this implies that it is surjective because its image is a closed subset of $\betaa X$ that contains the dense subspace $\{\gtm_x^{*}=\rho_A(\gtm_{A;x}):\, x\in X\}$, see \ref{stone1} (4). Thus all we need to prove is that $\rho_A$ is injective. Let $\gtm_1$ and $\gtm_2$ be two distinct maximal ideals of $A$. Then $\gtm_1+\gtm_2=A$ and there exist $f_1\in\gtm_1$ and $f_2\in\gtm_2$ such that $f_1+f_2=2$. In particular $\Zz_X(f_1^2+f_2^2)=\Zz_X(f_1)\cap\Zz_X(f_2)=\varnothing$. Let us see that 
\begin{equation}\label{suma}
\frac{|f_i(x)|}{f_1(x)^2+f_2(x)^2}\leq1\, \text{ for } \, i=1,2 \, \text{ and for every point } \, x\in X.
\end{equation}
It is enough to prove it for $i=1$. If $|f_1(x)|\leq1$ then $f_1(x)\leq1$ and $f_2(x)\geq1\geq|f_1(x)|$. If $|f_1(x)|\geq1$ then $f_1(x)^2+f_2(x)^2\geq f_1(x)^2\geq|f_1(x)|$. In both cases the inequality \eqref{suma} holds for $i=1$. Consequently, $g_i:=|f_i|\cdot(f_1^2+f_2^2)^{-1}\in{\mathcal S}^*(X)\subset A$. In addition, $|f_i|^2=f_i^2\in\gtm_i$, so $|f_i|\in\gtm_i$. Hence the function
$$
F_i=\frac{f_i^2}{f_1^2+f_2^2}=|f_i|\cdot g_i\in\gtm_i.
$$
Thus $F_1$ and $F_2$ are bounded semialgebraic functions and $F_i\in\gtm_i$ for $i=1,2$. Consequently $F_i\in\rho_A(\gtm_i)$, which implies that $\rho_A(\gtm_1)\neq\rho_A(\gtm_2)$ since $F_1+F_2=1$.
\end{proof} 





\subsection{Fixed and free ideals}\label{s33}

\begin{defn} \em  An ideal $\gta$ of $A$ is said to be \em fixed \em if all functions in $\gta$ vanish simultaneously at some point of $X$. Otherwise, the ideal $\gta$ is \em free\em.  
\end{defn}

Our first goal is this subsection is to characterize the fixed maximal ideals of $A$. First we need some preliminaries.
 
\begin{prop}\label{free0}  Let $f\in A$ whose zeroset $\Zz_X(f)$ is not compact. Then $f$ lies in some proper free ideal $\gta$ of $A$.
\end{prop}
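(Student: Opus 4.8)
The plan is to show that $f$ lies in a \emph{free maximal} ideal of $A$, the point being that the non-compactness of $\Zz_X(f)$ must already be detected by the semialgebraic Stone--\v{C}ech compactification $\betaa X$. Throughout I identify $X$ with ${\tt j}(X)\subset\betaa X=\Max({\mathcal S}^*(X))$ and use that ${\tt j}$ is a topological embedding with dense image, see \cite{fg2}. Recall from Theorem \ref{stone} and Corollary \ref{convex1}(2) that for a maximal ideal $\gtn$ of $A$ the ideal $\gtn\cap{\mathcal S}^*(X)$ is already maximal in ${\mathcal S}^*(X)$, so $\rho_A(\gtn)=\gtn\cap{\mathcal S}^*(X)$, and from the proof of Theorem \ref{stone} that $\rho_A(\gtm_{A;x})=\gtm^*_x$ for every $x\in X$. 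Since a maximal ideal of $A$ is fixed exactly when it equals some $\gtm_{A;x}$ (a fixed maximal ideal is contained in, hence equal to, the $\gtm_{A;x_0}$ determined by a common zero $x_0$), the free maximal ideals of $A$ are precisely the $\gtn\in\Max(A)$ with $\rho_A(\gtn)\notin X$.

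First I would pass from $A$ down to ${\mathcal S}^*(X)$. By Theorem \ref{loc1} we may write $f=F/G$ with $F\in{\mathcal S}^*(X)$ and $G\in D_A=\Uu(A)\cap{\mathcal S}^*(X)$; since $G$ is a unit of $A\subset{\mathcal S}(X)$ it has no zero on $X$, so $\Zz_X(F)=\Zz_X(f)=:Z$. If $\gtn$ is a maximal ideal of $A$ and $\gtm:=\rho_A(\gtn)=\gtn\cap{\mathcal S}^*(X)$, then $f\in\gtn\iff F\in\gtn\iff F\in\gtm$: the first equivalence because $F=fG$ with $G^{-1}\in A$, the second because $F\in{\mathcal S}^*(X)$ and $\gtn\cap{\mathcal S}^*(X)=\gtm$. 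Hence it suffices to produce a maximal ideal $\gtm$ of ${\mathcal S}^*(X)$ with $F\in\gtm$ and $\gtm\notin X$; then $\gta:=\rho_A^{-1}(\gtm)$ is, by the previous paragraph, a free maximal ideal of $A$ containing $f$, and being maximal it is automatically proper.

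The heart of the matter is the topological claim $\cl_{\betaa X}(Z)\not\subset X$. Granting it, choose $\gtm\in\cl_{\betaa X}(Z)\setminus X$. The set $\{\gtq\in\betaa X:F\in\gtq\}$ is Zariski-closed in $\betaa X$, being the complement of $\Dd_{{\mathcal S}^*(X)}(F)\cap\betaa X$, and it contains $Z={\tt j}(Z)$ because $F(x)=0$ for $x\in Z=\Zz_X(F)$; hence it contains $\cl_{\betaa X}(Z)$, so $F\in\gtm$, while $\gtm\notin X$ by construction. To prove the claim, note that $Z=\Zz_X(f)$ is closed in $X$, hence closed in the subspace $X$ of $\betaa X$; if $\cl_{\betaa X}(Z)$ were contained in $X$ it would equal $\cl_{\betaa X}(Z)\cap X=\cl_X(Z)=Z$, and then $Z$ would be a closed subset of the compact space $\betaa X$, hence compact, contradicting the hypothesis.

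The only genuinely non-routine ingredient is this last claim, i.e. that $\betaa X$ "sees" the non-compactness of the closed semialgebraic set $Z$; it rests only on ${\tt j}$ being an embedding onto a dense subspace and on the elementary identity $\cl_Y(S)\cap W=\cl_W(S)$ for a subspace $W\subseteq Y$. A more self-contained alternative, avoiding $\betaa X$, is available: since $Z$ is closed in $X$ and non-compact, either $Z$ is unbounded or $\cl_{\R^n}(Z)\setminus X\neq\varnothing$, and in either case the curve selection lemma yields a continuous semialgebraic arc $\gamma$ in $Z$ that eventually leaves every compact subset of $X$; then $\gta:=\{h\in A:\ h\circ\gamma\ \text{vanishes near the escaping end of}\ \gamma\}$ is visibly a proper ideal containing $f$, and it is free because for each $x\in X$ a suitable bounded semialgebraic "bump" at $x$ that vanishes on the tail of $\gamma$ lies in $\gta\setminus\gtm_{A;x}$.
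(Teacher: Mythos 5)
Your argument is correct, but it follows a genuinely different route from the paper. You deduce the statement from the machinery already built in Section \ref{s3}: writing $f=F/G$ with $F\in{\mathcal S}^*(X)$, $G\in D_A$ via Theorem \ref{loc1}, using the homeomorphism $\rho_A:\Max(A)\to\betaa X$ of Theorem \ref{stone}, and observing that a closed non-compact $Z=\Zz_X(f)$ must satisfy $\cl_{\betaa X}(Z)\not\subset X$ (your topological step is sound, granted the standard fact from \cite{fg2} that ${\tt j}$ embeds $X$ as a subspace of $\betaa X$ --- a fact the paper itself uses implicitly, e.g.\ the identity $X\cap\cl_{\betaa X}(Z)=\cl_X(Z)$ in the proof of Theorem \ref{EXT}); pulling back a point of $\cl_{\betaa X}(Z)\setminus X$ along $\rho_A$ gives a \emph{free maximal} ideal containing $f$, which is stronger than what was asked. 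The paper instead argues directly and elementarily: non-compactness of $\Zz_X(f)$ yields an open semialgebraic cover $\{W_i\}_{i\in I}$ of $\Zz_X(f)$ with no finite subcover; choosing $g_i\in{\mathcal S}^*(\R^n)$ with $\Zz_{\R^n}(g_i)=\R^n\setminus W_i$, the ideal $\gta$ of $A$ generated by $f$ and the restrictions $f_i:=g_i|_X$ is proper (an equation $1=gf+\sum_{j\in J}f_jh_j$ with $J$ finite fails at a point of $\Zz_X(f)\setminus\bigcup_{j\in J}W_j$) and free (the generators have no common zero). So the paper's proof is self-contained, needing only the covering characterization of compactness and zero sets of bounded semialgebraic functions, at the cost of producing a non-maximal free ideal; your proof buys the sharper conclusion (a free maximal ideal) but leans on Theorems \ref{loc1} and \ref{stone}, Corollary \ref{convex1}, and the embedding property of ${\tt j}$. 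Your closing ``curve selection'' alternative is only a sketch (the choice of the bump at $x$ and the unbounded case need the small argument that a suitable tail of the arc is closed in $X$ and avoids $x$), but it is not needed for your main argument.
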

\begin{proof} Since $\Zz_X(f)$ is not compact, there exists a family $\{W_i\}_{i\in I}$ of open semialgebraic subsets of $\R^n$ which covers $\Zz_X(f)$ and admitting no finite subcovering. For each index $i\in I$ there exists $g_i\in{\mathcal S}^*(\R^n)$ such that $\Zz_{\R^n}(g_i)=\R^n\setminus W_i$. Let us show that the ideal $\gta$ of $A$ generated by $f$ and the restrictions $f_i:=g_i|_X\in A$ is a proper free ideal. In case $\gta=A$ we have an equality 
\begin{equation}\label{total}
1=gf+\sum_{j\in J}f_jh_j
\end{equation} 
for some finite subset $J$ of $I$ and some functions $g,h_j\in A$. Since the finite family $\{W_j\}_{j\in J}$ does not cover $\Zz_X(f)$ there exists a point $x\in\Zz_X(f)\setminus\bigcup_{j\in J}W_j$, which contradicts the equality \eqref{total}. Thus, $\gta$ is a proper ideal of $A$, and we check now that it is free. Since  $\Zz_X(f)\subset\bigcup_{i\in I}W_i$ we have 
$$
\bigcap_{h\in\gta}\Zz_X(h)=\Zz_X(f) \cap \bigcap_{i\in I}\Zz_X(f_i)=\Zz_X(f)\cap\bigcap_{i\in I}(\R^n\setminus W_i)=\varnothing,
$$
and so $\gta$ is a proper free ideal of $A$.
\end{proof}

\begin{remarks}\label{arquimediano} \em (1) Let $X\subset\R^n$ be a semialgebraic set and let $A$ be an intermediate $\R$-algebra between ${\mathcal S}^{*}(X)$ and ${\mathcal S}(X)$. The semialgebraic homeomorphism
$$
\varphi:\BB_n(0,1):=\{x\in\R^n:\, \|x\|<1\}\to\R^n,\ x\mapsto\frac{x}{\sqrt{1-\|x\|^2}},
$$
induces an $\R$-algebras isomorphism ${\widetilde\varphi}:{\mathcal S}(X)\to{\mathcal S}(Y),\,f\mapsto f\circ\varphi$, where $Y:=\varphi^{-1}(X)$ is bounded, that maps ${\mathcal S}^*(X)$ onto ${\mathcal S}^*(Y)$. Thus, $B:={\widetilde\varphi}(A)$ is an intermediate $\R$-algebra between ${\mathcal S}^*(Y)$ and ${\mathcal S}(Y)$. Hence, substituting $X$ by $Y$ and $A$ by $B$ if necessary, we may always assume that $X$ is bounded.

\noindent (2) If $X$ is not compact then $A$ has free maximal ideals. Indeed, we may assume, by part (1), that $X$ is bounded, and so there exists a point $p\in\cl_{\R^n}(X)\setminus X$. Consider the bounded semialgebraic function $f:X\to\R,\ x\to\|x-p\|$ whose zeroset is empty, so the ideal generated by $f$ in $A$ is free. 

\noindent (3) The fixed maximal ideals of $A$ are those of the form $\gtm_{A;x}$ for some $x\in X$. We noticed in the proof of Corollary \ref{convex1} (5) that $\gtm_{A;x}$ is a maximal ideal. In addition it is fixed because $x\in\Zz_X(f)$ for every $f\in\gtm_{A;x}$. Conversely, let $\gtn\in\Max(A)$ be a fixed ideal and let $x\in X$ be a point such that all functions in $\gtn$ vanish at $x$. This means that $\gtn\subset\gtm_{A;x}$ and, since $\gtn$ is maximal, the equality $\gtn=\gtm_{A;x}$ holds true.

\noindent (4) Let $\gtn$ be a maximal ideal of ${\mathcal S}^*(X)$. The map $\R\to{\mathcal S}^*(X)/\gtn;\, r\mapsto r+\gtn$, is an isomorphism. It is injective because $\R$ is a field, and it is surjective because $\R$ does not admit proper archimedean extensions and ${\mathcal S}^*(X)/\gtn$ is an archimedean extension of $\R$ since given $f\in{\mathcal S}^*(X)$ there exists $r\in\R$ such that $|f(x)|<r$ for every $x\in X$. Thus, since $\R$ admits a unique automorphism, there is no ambiguity to refer $f+\gtn\in\R$ as a real number for every $f\in{\mathcal S}^*(X)$. In particular, for each $x\in X$ the isomorphism ${\mathcal S}^*(X)/\gtm_x^{*}\cong\R$ identifies $f+\gtm_x^{*}$ with $f(x)$.

\noindent (5) It was proved in \cite[Corollary 3.10]{fg2} that the equality ${\mathcal S}^*(X)={\mathcal S}(X)$ holds if and only if $X$ is compact. In this case there is no proper intermediate algebras between ${\mathcal S}^*(X)$ and ${\mathcal S}(X)$.

\noindent (6) In the statement of Theorem \ref{freefixed} and the proof of Proposition \ref{lojasiewicz} we will use \cite[3.6]{fg2} that says that for every maximal ideal $\gtm^*$ of ${\mathcal S}^{*}(X)$ there exists a unique maximal ideal $\gtm$ of ${\mathcal S}(X)$ such that $\gtm\cap{\mathcal S}^{*}(X)\subset\gtm^*$.
 
\end{remarks}

\begin{thm}\label{freefixed} Let $\gtm_A$ be a maximal ideal of $A$, let $\gtp:=\gtm_A\cap{\mathcal S}^{*}(X)$ and let $\gtm^{*}$ be the unique maximal ideal of ${\mathcal S}^{*}(X)$ containing $\gtp$. Let $\gtm$ be the unique maximal ideal of ${\mathcal S}(X)$ such that $\gtq:=\gtm\cap{\mathcal S}^{*}(X)\subset\gtm^*$. Then, the following assertions are equivalent:

\noindent  \em (1) $\hgt(\gtm_A)=\hgt(\gtm^*)=\hgt(\gtm)$.

\noindent (2) $\gtp=\gtm^*=\gtq$.

\noindent (3) $\gtm$ is a fixed ideal of ${\mathcal S}(X)$ and $\gtm_A$ is a fixed ideal of $A$. 

\noindent (4) $\gtm^*$ is a fixed ideal of ${\mathcal S}^{*}(X)$.
\end{thm}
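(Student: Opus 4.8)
The plan is to prove $(1)\Leftrightarrow(2)$ directly and then $(2)\Rightarrow(4)\Rightarrow(3)\Rightarrow(2)$. I would begin with three facts that hold for \emph{any} maximal ideal $\gtm_A$ of $A$. By Corollary~\ref{convex1}(2), $\gtp=\gtm_A\cap\mathcal{S}^*(X)$ is already maximal in $\mathcal{S}^*(X)$, so $\gtm^*=\gtp$. Since $A=\mathcal{S}^*(X)_{D_A}$ by Theorem~\ref{loc1}, the contraction $\Spec(A)\to\Spec(\mathcal{S}^*(X))$ is an inclusion-preserving homeomorphism onto $\{\gtr:\gtr\cap D_A=\varnothing\}$; as every prime of $\mathcal{S}^*(X)$ contained in $\gtp$ avoids $D_A$ (it cannot contain a unit of $A$), one gets $\hgt(\gtm_A)=\hgt(\gtp)=\hgt(\gtm^*)$. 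Applying the same reasoning to $\mathcal{S}(X)=\mathcal{S}^*(X)_D$, where $D$ is the set of nowhere vanishing bounded semialgebraic functions (\cite[3.2]{fg3}), and noting that $\gtq=\gtm\cap\mathcal{S}^*(X)$ avoids $D$, one gets $\hgt(\gtm)=\hgt(\gtq)$. Finally $\gtq\subseteq\gtm^*$ and $\hgt(\gtq)\le\dim\mathcal{S}^*(X)=\dim X<\infty$ (\cite[Thm.~1.1]{fg1}), so a strict inclusion $\gtq\subsetneq\gtm^*$ would force $\hgt(\gtm^*)\ge\hgt(\gtq)+1$.

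From this, $(1)\Leftrightarrow(2)$ is formal: by the first two equalities, $(1)$ is equivalent to $\hgt(\gtm)=\hgt(\gtm^*)$, hence to $\hgt(\gtq)=\hgt(\gtm^*)$; by the last remark and $\gtq\subseteq\gtm^*$ this holds exactly when $\gtq=\gtm^*$, and since $\gtp=\gtm^*$ always, this is precisely $(2)$.

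Next $(2)\Rightarrow(4)$. From $(2)$ we have $\gtm^*=\gtq=\gtm\cap\mathcal{S}^*(X)$, which avoids $D$; equivalently, $\gtm^*$ contains no nowhere vanishing bounded semialgebraic function. The crucial point — and the one I expect to be the main obstacle — is that a maximal ideal of $\mathcal{S}^*(X)$ with this property must be fixed. Reducing to a bounded $X$ via Remarks~\ref{arquimediano}(1), this amounts to the assertion that the canonical projection $\pi\colon\betaa X\to\cl_{\R^n}(X)$ sends $\betaa X\setminus X$ into $\cl_{\R^n}(X)\setminus X$: indeed, if $\pi(\gtm^*)=p\notin X$ then $x\mapsto\|x-p\|$ is a nowhere vanishing bounded semialgebraic function lying in $\gtm^*$. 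This fact belongs to the analysis of $\betaa X$ and of the fixed maximal ideals of $\mathcal{S}^*(X)$ carried out in \cite{fg2}; granting it, $\gtm^*$ is fixed, which is $(4)$.

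It remains to close the loop $(4)\Rightarrow(3)\Rightarrow(2)$. If $\gtm^*=\gtm^*_x$ with $x\in X$, then $\gtm_x\cap\mathcal{S}^*(X)=\gtm^*_x=\gtm^*$, so the uniqueness in Remarks~\ref{arquimediano}(5) forces $\gtm=\gtm_x$, a fixed ideal of $\mathcal{S}(X)$; moreover $\gtm_A\cap\mathcal{S}^*(X)=\gtp=\gtm^*_x=\gtm_{A;x}\cap\mathcal{S}^*(X)$, so injectivity of the contraction $\Spec(A)\to\Spec(\mathcal{S}^*(X))$ gives $\gtm_A=\gtm_{A;x}$, a fixed ideal of $A$; this is $(3)$. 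Conversely, if $(3)$ holds then $\gtm=\gtm_x$ for some $x\in X$, hence $\gtq=\gtm_x\cap\mathcal{S}^*(X)=\gtm^*_x$ is maximal, so $\gtq=\gtm^*$, and together with $\gtp=\gtm^*$ this yields $(2)$. Since $(1)\Leftrightarrow(2)$ and $(2)\Leftrightarrow(3)\Leftrightarrow(4)$, the four statements are equivalent.
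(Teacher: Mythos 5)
Your argument hinges on the unconditional claim, extracted from Corollary \ref{convex1}(2), that $\gtp=\gtm_A\cap{\mathcal S}^{*}(X)$ is maximal for \emph{every} maximal ideal $\gtm_A$ of $A$, whence $\gtp=\gtm^{*}$ and $\hgt(\gtm_A)=\hgt(\gtm^{*})$ always. This is false, and it is exactly the dichotomy the theorem is designed to detect: if it were true, then applying it to the intermediate algebra ${\mathcal S}(X)$ itself would give $\gtq=\gtm^{*}$ for every maximal ideal of ${\mathcal S}(X)$, so condition (2), hence (4), would always hold and every maximal ideal of ${\mathcal S}^{*}(X)$ would be fixed, contradicting Remarks \ref{arquimediano}(2) for non-compact $X$ (and rendering the archimedean hypothesis in \ref{arquimedianO}--\ref{radical} vacuous). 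Concretely, for $X=\R$, $A={\mathcal S}(\R)$ and $\gtm_A=\gtm$ the free maximal ideal of functions vanishing on a neighbourhood of $+\infty$, one has $\gtp=\gtq\subsetneq\gtm^{*}=\{f\in{\mathcal S}^{*}(\R):\lim_{x\to+\infty}f(x)=0\}$ and $\hgt(\gtm_A)=0<1=\hgt(\gtm^{*})$. So only the ``if'' half of Corollary \ref{convex1}(2) ($\gtq$ maximal $\Rightarrow$ $\gtp$ maximal) can be used; the direction you invoke cannot. This breaks your proof of $(1)\Leftrightarrow(2)$: what the two localization correspondences give unconditionally is $\hgt(\gtm_A)=\hgt(\gtp)$ and $\hgt(\gtm)=\hgt(\gtq)$ (these parts of your argument are fine), and then, the heights being bounded by $\dim X$ and $\gtp,\gtq\subset\gtm^{*}$, condition (1) holds if and only if both inclusions are equalities, which is (2); this is the paper's route via Proposition \ref{saturado}. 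The same unwarranted ``$\gtp=\gtm^{*}$'' reappears in your $(4)\Rightarrow(3)$ and $(3)\Rightarrow(2)$. The latter is easily repaired by using the half of (3) you never touch, namely that $\gtm_A$ is fixed, say $\gtm_A=\gtm_{A;x}$, which gives $\gtp=\gtm^{*}_x=\gtm^{*}$; but in $(4)\Rightarrow(3)$ you must argue as the paper does: from $\gtm^{*}=\gtm^{*}_x$ and $\gtp\subset\gtm^{*}$, each $f\in\gtm_A$ satisfies $f\cdot(1+f^2)^{-1}\in\gtp$, hence $f(x)=0$, so $\gtm_A=\gtm_{A;x}$.

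The second gap is in $(2)\Rightarrow(4)$. As you say, everything reduces to the assertion that a maximal ideal of ${\mathcal S}^{*}(X)$ containing no function with empty zeroset is fixed, equivalently (for bounded $X$) that the canonical map $\betaa X\to\cl_{\R^n}(X)$ sends $\betaa X\setminus X$ into $\cl_{\R^n}(X)\setminus X$. You prove only the easy half (if the image point $p$ lies outside $X$, then $\|x-p\|\in\gtm^{*}$ has empty zeroset) and defer the substantive half --- that no free maximal ideal of ${\mathcal S}^{*}(X)$ can lie over a point of $X$ --- to an unspecified ``analysis in \cite{fg2}''. That is precisely the content the paper imports through the precise statements \cite[3.6]{fg2} and \cite[3.7]{fg2} (relating $\gtq=\gtm^{*}$ to fixedness of $\gtm$), after which fixedness of $\gtm_A$ and of $\gtm^{*}$ again follows via $f\cdot(1+f^2)^{-1}$. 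As written your step is not closed; it can be (for instance, if each coordinate function $\x_i$ is congruent to $p_i$ modulo $\gtm^{*}$ with $p\in X$, then continuity at $p$ together with absolute convexity of $\gtm^{*}$ forces $g\equiv g(p)$ modulo $\gtm^{*}$ for all $g\in{\mathcal S}^{*}(X)$, so $\gtm^{*}=\gtm^{*}_p$), but either such an argument or a precise citation is required.
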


\begin{proof} The equivalence (1) $\iff (2)$ follows from Proposition \ref{saturado} because $A={\mathcal S}^*(X)_{D_A}$ is the localization of ${\mathcal S}^*(X)$ at $D_A$ and ${\mathcal S}(X)$ is the localization of ${\mathcal S}^*(X)$ at the multiplicatively closed set ${\mathcal W}(X):=\{f\in{\mathcal S}^*(X): \Zz_X(f)=\varnothing\}$. 

\noindent (2) $\Longrightarrow$ (3) Since $\gtq=\gtm^*$ it follows from \cite[3.7]{fg2} that $\gtm$ is a fixed ideal of ${\mathcal S}(X)$. Let $x\in X$ such that all functions in $\gtm$ vanish at $x$. As $\gtp=\gtq\subset\gtm$, all functions in $\gtp$ vanish at $x$. Let $f\in\gtm_A$. Then $f\cdot(1+f^2)^{-1}\in\gtp$ vanish at $x$, and so $f(x)=0$. Thus, $\gtm_A$ is a fixed ideal of $A$.

\noindent (3) $\Longrightarrow (4)$ Let $x\in X$ such that $\gtm:=\gtm_{A;x}$. All functions in $\gtp$ vanish at $x$, so $\gtp\subset\gtm_x^{*}$. Since ${\mathcal S}^{*}(X)$ is a Gelfand ring, $\gtm^{*}=\gtm_x^{*}$ is a fixed ideal.

\noindent (4) $\Longrightarrow (2)$ The equality $\gtq=\gtm^*$ follows from \cite[3.7]{fg2}. To show that $\gtp=\gtm^*$ is equivalent to prove that $\gtp$ is a maximal ideal. To that end note that since $\gtm^*$ is a fixed ideal there exists a point $x\in X$ such that each function in $\gtm^*$ vanishes at $x$. Let $f\in\gtm_A$. Since $(1+f^2)^{-1}\in{\mathcal S}^{*}(X)\subset A$ the function $g:=f\cdot(1+f^2)^{-1}\in\gtp\subset\gtm^*$. Thus $g(x)=0$ and so $f(x)=0$. This shows that $\gtm_A=\gtm_{A;x}$, which implies $\gtp=\gtm_A\cap{\mathcal S}^{*}(X)=\gtm_{A;x}\cap{\mathcal S}^{*}(X)=\gtm^*$. Hence $\gtp$ is a maximal ideal, as wanted.  
\end{proof}

\begin{prodef}\label{arquimedianO} \em (1)  A maximal ideal $\gtm$ of $A$ is said to be \em archimedean \em if the natural inmersion $\R\hookrightarrow A/\gtm$ is surjective. 

\noindent (2) All maximal ideals of ${\mathcal S}^*(X)$ are archimedean.

\noindent (3) The ideal $\gtm$ is archimedean if and only if $\gtp:=\gtm\cap{\mathcal S}^{*}(X)$ is a maximal ideal.
\end{prodef}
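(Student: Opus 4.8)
The plan is to reduce the statement to the localization description $A={\mathcal S}^*(X)_{D_A}$ of Proposition~\ref{saturado} together with the computation of the residue fields of ${\mathcal S}^*(X)$; part~(1) is merely a definition, so there is nothing to prove there. For part~(2) I would reproduce the argument already sketched in Remarks~\ref{arquimediano}(4): given a maximal ideal $\gtm$ of ${\mathcal S}^*(X)$, the quotient field $k:={\mathcal S}^*(X)/\gtm$ is formally real, since $1+f_1^2+\cdots+f_r^2\ge1$ on $X$ has reciprocal bounded by $1$, hence is a unit of ${\mathcal S}^*(X)$ and cannot lie in $\gtm$; fixing an order on $k$ extending that of $\R$, one checks that $k$ is archimedean over $\R$, because for $f\in{\mathcal S}^*(X)$ one may pick $r\in\R$ with $|f|\le r$ on $X$, so that $r^2-f^2=(\sqrt{r^2-f^2})^2$ is a square in ${\mathcal S}^*(X)$ and therefore $\overline f^2\le r^2$, i.e.\ $|\overline f|\le r$, in $k$. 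As $\R$ has no proper archimedean ordered field extension, $\R\hookrightarrow k$ is onto; that is, $\gtm$ is archimedean.

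For part~(3), I would fix a maximal ideal $\gtm$ of $A$ and set $\gtp:=\gtm\cap{\mathcal S}^*(X)$, a prime ideal of ${\mathcal S}^*(X)$; since $\gtp$ is exactly the kernel of the composite ${\mathcal S}^*(X)\hookrightarrow A\to A/\gtm$, the inclusion induces an injective $\R$-algebra homomorphism $\iota\colon{\mathcal S}^*(X)/\gtp\hookrightarrow A/\gtm$, and the claim reduces to showing that $\R\hookrightarrow A/\gtm$ is onto if and only if $\gtp$ is maximal. If $\gtp$ is maximal in ${\mathcal S}^*(X)$, then ${\mathcal S}^*(X)/\gtp$ is a field, equal to $\R$ by part~(2); writing an arbitrary $g\in A$ as $g=f/d$ with $f\in{\mathcal S}^*(X)$ and $d\in D_A=\Uu(A)\cap{\mathcal S}^*(X)$ (possible by Proposition~\ref{saturado}(2), e.g.\ $f=g(1+g^2)^{-1}$ and $d=(1+g^2)^{-1}$), and using that $d\in\Uu(A)$ forces $d\notin\gtm$, hence $d\notin\gtp$, so that $\overline d$ is invertible in the field ${\mathcal S}^*(X)/\gtp$, one gets that the class of $g$ in $A/\gtm$ equals $\iota(\overline f\,\overline d^{-1})$; thus $\iota$ is an isomorphism, $A/\gtm=\R$, and $\gtm$ is archimedean. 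Conversely, if $\gtm$ is archimedean then $A/\gtm=\R$, so $\iota$ identifies ${\mathcal S}^*(X)/\gtp$ with a subring of $\R$ containing $\R$; hence ${\mathcal S}^*(X)/\gtp=\R$ is a field and $\gtp$ is maximal.

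I do not expect a real obstacle: once part~(2) is in hand, part~(3) is localization bookkeeping. The one step deserving attention is the implication ``$d\in D_A\Rightarrow\overline d$ is invertible in ${\mathcal S}^*(X)/\gtp$'', which is precisely where maximality of $\gtp$ is needed and which breaks down for a non-maximal $\gtp$; in that case $A/\gtm$ strictly enlarges $\R$, so the ``if and only if'' in~(3) is genuine. It is also worth recording that ``archimedean'' is strictly weaker than ``fixed'': by part~(2) every maximal ideal of ${\mathcal S}^*(X)$ is archimedean, including the free ones corresponding to the points of $\betaa X\setminus X$.
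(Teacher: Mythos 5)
Your proposal is correct and follows essentially the same route as the paper: part (2) via the fact that ${\mathcal S}^*(X)/\gtm$ is an archimedean (ordered) extension of $\R$, and part (3) via the sandwich $\R\subset{\mathcal S}^*(X)/\gtp\subset A/\gtm$ in one direction and the localization description $A={\mathcal S}^*(X)_{D_A}$ in the other. The only differences are cosmetic: you make the ordering/formal-reality details of (2) explicit where the paper's Remark asserts them tersely, and in (3) you verify element-by-element ($g=f/d$, $\overline d$ invertible) what the paper phrases as ``a field coincides with its localization $({\mathcal S}^*(X)/\gtp)_{D_A(\gtp)}=A/\gtm$''.
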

\begin{proof} (2) This is part (4) in Remark \ref{arquimediano}.

\noindent (3) Notice that $\R\subset{\mathcal S}^{*}(X)/\gtp\subset A/\gtm$. Thus, if $\gtm$ is archimedean we have $\R\subset{\mathcal S}^{*}(X)/\gtp\subset\R$, so ${\mathcal S}^{*}(X)/\gtp=\R$ is a field. Hence $\gtp$ is a maximal ideal.

Conversely, suppose that $\gtp$ is a maximal ideal. Let $D_A:=\Uu(A)\cap{\mathcal S}^{*}(X)$. The quotient $A/\gtm$ is the ring of fractions of ${\mathcal S}^{*}(X)/\gtp$ with respect to the image $D_A(\gtp):=\pi(D_A)$ by the projection $\pi:{\mathcal S}^{*}(X)\to{\mathcal S}^{*}(X)/\gtp$. As the ideal $\gtp$ is maximal the quotient ${\mathcal S}^{*}(X)/\gtp$ is a field.  Hence it coincides with its localization $({\mathcal S}^{*}(X)/\gtp)_{D_A(\gtp)}=A/\gtm$. Therefore, using part (4) in Remark \ref{arquimediano} again, $A/\gtm={\mathcal S}^{*}(X)/\gtp=\R$.
\end{proof}

We are in position to present a \L ojasiewicz's inequality for some intermediate $\R$-algebras between ${\mathcal S}^{*}(X)$ and ${\mathcal S}(X)$. First we introduce new notations.

\begin{nots} \em (1)  Let $f\in A$. We denote $\ceros_{\Max(A)}(f):=\{\gtm\in\Max(A): f\in\gtm\}$.

\noindent (2) In the particular case that $A:={\mathcal S}^{*}(X)$ we denote $\ceros_{\betaa X}(f):=\{\gtm^*\in\betaa X: f\in\gtm^*\}$.
\end{nots}

\begin{prop}[\L ojasiewicz's inequality]\label{lojasiewicz} Let $A$ be an intermediate algebra whose maximal ideals  are archimedean.  Let $f,g\in A$ be such that $\ceros_{\Max(A)}(f)\subset\ceros_{\Max(A)}(g)$. Then there exists $h\in A$ and a positive integer $\ell$ such that $g^\ell=fh$. 
\end{prop}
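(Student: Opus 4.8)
The plan is to reduce, through Proposition~\ref{saturado} and Theorem~\ref{stone}, to a genuine semialgebraic \L ojasiewicz inequality on a compact semialgebraic subset of the plane. First I dispose of the trivial cases: if $g\equiv 0$ take $h=0$ and $\ell=1$; and if $f\equiv 0$, then $\ceros_{\Max(A)}(f)=\Max(A)$, so by hypothesis $g$ lies in every maximal ideal of $A$, in particular in each $\gtm_{A;x}$, which forces $g\equiv 0$. So assume $f,g\not\equiv 0$. By Proposition~\ref{saturado}(2) we may write $f=FG^{-1}$ and $g=PQ^{-1}$ with $F:=f(1+f^2)^{-1}$, $G:=(1+f^2)^{-1}$, $P:=g(1+g^2)^{-1}$, $Q:=(1+g^2)^{-1}$, all four lying in ${\mathcal S}^*(X)$ and $G,Q\in D_A=\Uu(A)\cap{\mathcal S}^*(X)$. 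As $G,Q$ are units of $A$ that do not vanish on $X$, we get $\ceros_{\Max(A)}(f)=\ceros_{\Max(A)}(F)$, $\ceros_{\Max(A)}(g)=\ceros_{\Max(A)}(P)$ and $\Zz_X(F)=\Zz_X(f)$, $\Zz_X(P)=\Zz_X(g)$; moreover $F,P\not\equiv 0$, and if we find $H\in{\mathcal S}^*(X)$ and an integer $\ell\geq 1$ with $P^\ell=FH$, then $h:=GQ^{-\ell}H\in A$ satisfies $g^\ell=fh$. Hence it suffices to treat nonzero $F,P\in{\mathcal S}^*(X)$ with $\ceros_{\Max(A)}(F)\subset\ceros_{\Max(A)}(P)$.

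Next I transfer the hypothesis to $\betaa X$. Since every maximal ideal of $A$ is archimedean, Proposition and Definition~\ref{arquimedianO}(3) shows that $\gtn\cap{\mathcal S}^*(X)$ is a maximal ideal of ${\mathcal S}^*(X)$ for each $\gtn\in\Max(A)$; thus the homeomorphism $\rho_A\colon\Max(A)\to\betaa X$ of Theorem~\ref{stone} is simply $\gtn\mapsto\gtn\cap{\mathcal S}^*(X)$, and for $F\in{\mathcal S}^*(X)$ one has $F\in\gtn\iff F\in\rho_A(\gtn)$, so $\rho_A$ maps $\ceros_{\Max(A)}(F)$ bijectively onto $\ceros_{\betaa X}(F)$. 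Recalling that in the model $\betaa X=\Max({\mathcal S}^*(X))$, and for the continuous extension $\widehat F\colon\betaa X\to\R$ of $F$, one has $F\in\gtm^*\iff\widehat F(\gtm^*)=0$ (see \ref{stone1}(4) and \cite{fg2}), the hypothesis reads $\Zz_{\betaa X}(\widehat F)\subset\Zz_{\betaa X}(\widehat P)$.

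Now for the geometric core, consider the semialgebraic set $\Gamma:=\{(F(x),P(x)):x\in X\}\subset\R^2$, which is bounded because $F,P\in{\mathcal S}^*(X)$, so that $W:=\cl_{\R^2}(\Gamma)$ is a compact semialgebraic set; write $u,v\colon W\to\R$ for the two coordinate projections. I claim $\Zz_W(u)\subset\Zz_W(v)$. Indeed, given $(0,b)\in W$, pick a net $(x_\lambda)$ in $X$ with $(F(x_\lambda),P(x_\lambda))\to(0,b)$; by compactness of $\betaa X$, the net $(x_\lambda)$, seen in $X\subset\betaa X$, has a subnet converging to some $\xi\in\betaa X$, and by continuity of $\widehat F$ and $\widehat P$ (using that $F(x_\lambda)\to 0$ and $P(x_\lambda)\to b$ already along the whole net) we obtain $\widehat F(\xi)=0$ and $\widehat P(\xi)=b$. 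From $\widehat F(\xi)=0$ and the inclusion of the previous paragraph, $\widehat P(\xi)=0$, i.e. $b=0$, which proves the claim. I expect this transfer between the planar closure $W$ and the non-semialgebraic but compact space $\betaa X$ to be the step requiring most care. After it, the remainder is classical: the semialgebraic \L ojasiewicz inequality \cite[2.6.6]{bcr} applied on the compact semialgebraic set $W$ provides an integer $N\geq 1$ and a constant $C>0$ with $|v|^N\leq C|u|$ on $W$, whence $|P(x)|^N\leq C|F(x)|$ for all $x\in X$ and, in particular, $\Zz_X(F)\subset\Zz_X(P)$. Then the semialgebraic function $H$ given by $H:=P^{2N}F^{-1}$ on $X\setminus\Zz_X(F)$ and $H:=0$ on $\Zz_X(F)$ is bounded, by $C\sup_X|P|^N$, and continuous (continuity at points of $\Zz_X(F)$ holds since there $|H|\leq C|P|^N$ while $P$ vanishes on $\Zz_X(F)$), hence $H\in{\mathcal S}^*(X)$ and $P^{2N}=FH$ on $X$. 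Taking $\ell:=2N$ and $h:=GQ^{-2N}H\in A$ finishes the proof; the only routine verifications remaining are the continuity, boundedness and semialgebraicity of $H$.
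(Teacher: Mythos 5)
Your proof is correct, and its core is genuinely different from the paper's. Both arguments begin the same way: reduce to bounded functions by dividing out units of the form $1+f^2$, $1+g^2$ (you divide $f$ and $g$ separately, the paper divides both by the common factor $(1+f^2)(1+g^2)$), and both use the archimedean hypothesis in the same essential way, namely to identify the trace $\gtn\cap{\mathcal S}^*(X)$ of each maximal ideal $\gtn$ of $A$ with a maximal ideal of ${\mathcal S}^*(X)$ and thereby transfer the inclusion of zero sets from $\Max(A)$ to $\betaa X$; your route through the homeomorphism $\rho_A$ of Theorem \ref{stone} is, if anything, cleaner than the paper's detour through maximal ideals of ${\mathcal S}(X)$ and \cite[3.5]{fg2}. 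The divergence is in the key step: the paper simply invokes the \L ojasiewicz inequality for ${\mathcal S}^*(X)$ with zero sets in $\betaa X$ from \cite[3.10]{fg6}, whereas you re-prove the special case you need from the classical \L ojasiewicz inequality on compact semialgebraic sets \cite[2.6.6]{bcr}, by pushing $X$ into the plane via $(F,P)$, taking the compact semialgebraic closure $W$ of the image, and proving $\Zz_W(u)\subset\Zz_W(v)$ by a net/compactness argument in $\betaa X$; your exponent $2N$ and the piecewise definition of $H$ correctly take care of continuity and boundedness on $\Zz_X(F)$, and the final bookkeeping with the units $G,Q$ is right. What this buys is independence from \cite{fg6}, at the cost of length; the one point to make explicit is your use of the fact that the extension $\widehat F$ of \ref{stone1}(4) satisfies $\widehat F(\gtm^*)=0$ if and only if $F\in\gtm^*$ (equivalently, $\widehat F(\gtm^*)=F+\gtm^*$ under the identification of Remark \ref{arquimediano}(4)). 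This is indeed how $\widehat F$ is obtained in \cite{fg2}, so the citation is legitimate, but the preliminaries of this paper only assert the existence of the extension, so you should state (or briefly justify, via uniqueness of continuous extensions on the dense subset $X$) this identification rather than leave it implicit.
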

\begin{proof} Consider the bounded semialgebraic functions
$$
f_1:=\frac{f}{(1+f^2)\cdot(1+g^2)}\,  \text{ and }\,  g_1:=\frac{g}{(1+f^2)\cdot(1+g^2)}.
$$ 
Let us prove that $\ceros_{\betaa X}(f_1)\subset\ceros_{\betaa X}(g_1)$. Let $\gtm^*\in\ceros_{\betaa X}(f_1)$. By \cite[Thm. 3.5]{fg2} there exists a unique maximal ideal $\gtm$ of ${\mathcal S}(X)$ such that $\gtm\cap{\mathcal S}^{*}(X)\subset\gtm^*$. As $\gtm\cap A$ is a prime ideal of $A$ there exists, by Corollary \ref{convex1} (4) a unique maximal ideal $\gtm_A$ of $A$ such that $\gtm\cap A\subset\gtm_A$. By hypothesis $\gtm_A$ is an archimedean maximal ideal and, by \ref{arquimedianO}, $\gtm_A\cap{\mathcal S}^{*}(X)$ is a maximal ideal. As $\gtp:=\gtm\cap{\mathcal S}^{*}(X)\subset\gtm\cap A\subset\gtm_A$ we have $\gtp\subset\gtm_A\cap{\mathcal S}^{*}(X)$. Thus, $\gtm^*$ and $\gtm_A\cap{\mathcal S}^{*}(X)$ are maximal ideals of the Gelfand ring ${\mathcal S}^{*}(X)$ containing the prime ideal $\gtp$ of ${\mathcal S}^{*}(X)$. Consequently, $\gtm_A\cap{\mathcal S}^{*}(X)=\gtm^*$ and so $\gtm_A\cap{\mathcal S}^{*}(X)\in\ceros_{\betaa X}(f_1)$. In particular $f_1\in\gtm_A$ and so $f\in\gtm_A$. Hence $g\in\gtm_A$. Then $g_1\in\gtm_A\cap{\mathcal S}^{*}(X)=\gtm^*$, that is, $\gtm^*\in\ceros_{\betaa X}(g_1)$. Now, by \cite[3.10]{fg6}, there exist $h_1\in{\mathcal S}^{*}(X)\subset A$ and a positive integer $\ell$ such that $g_1^{\ell}=f_1h_1$. denote  $F:=(1+f^2)\cdot(1+g^2)\in A$. Then 
$$
\frac{g^{\ell}}{F^{\ell}}=\left(\frac{f}{F}\right)\cdot h_1, \, \text{ and so, } g^{\ell}=f\cdot(F^{\ell-1}\cdot h_1).
$$
Therefore $h:=F^{\ell-1}\cdot h_1$ does the job.
\end{proof}

Our next goal is to obtain a Nullstellensatz for functions in the intermediate algebra $A$. First we need to introduce some terminology and basic results.  

\begin{defnsprop}\label{zAfiltros} \em (1) The family of all sets $\ceros_{\Max(A)}(f)$ for $f\in A$ is denoted by 
$$
\ceros_{\Max(A)}:=\{\ceros_{\Max(A)}(f):\ f\in A\}.
$$ 
A subset $\Ff$ of $\Pp(\ceros_{\Max(A)})$ is a \em $z_A$-filter \em on $X$ if it satisfies the following properties:

\noindent (1.1) $\varnothing\not\in\Ff$.

\noindent (1.2) Given $Z_1,Z_2\in\Ff$ then $Z_1\cap Z_2\in\Ff$.

\noindent (1.3) Given $Z\in\Ff$ and $Z'\in\ceros_{\Max(A)}$ such that $Z\subset Z'$ then $Z'\in\Ff$.

\noindent (2) Let $\Ff$ be a $z_A$-filter on $X$. We denote
$$
{\mathcal J}(\Ff):=\{f\in A:\ \ceros_{\Max(A)}(f)\in\Ff\},
$$ 
which is a proper ideal of $A$ satisfying $\ceros_{\Max(A)}[\J(\Ff)]=\Ff$. Indeed, by condition (1.1) the set ${\mathcal J}(\Ff)$ contains no unit of $A$. Let $f,g\in{\mathcal J}(\Ff)$. Then
$$
\ceros_{\Max(A)}(f+g)\supset\ceros_{\Max(A)}(f)\cap\ceros_{\Max(A)}(g)\in\Ff.
$$
Hence, $\ceros_{\Max(A)}(f+g)\in\Ff$, and therefore $f+g\in{\mathcal J}(\Ff)$. Now, let $f\in{\mathcal J}(\Ff)$ and $g\in A$. Then 
$$
\ceros_{\Max(A)}(fg)=\ceros_{\Max(A)}(f)\cup\ceros_{\Max(A)}(g)\supset\ceros_{\Max(A)}(f)\in\Ff.
$$
Thus, $\ceros_{\Max(A)}(fg)\in\Ff$, that is, $fg\in{\mathcal J}(\Ff)$. Finally, if $Z\in\ceros_{\Max(A)}[\J(\Ff)]$ there exists $f\in{\mathcal J}(\Ff)$ such that $Z=\ceros_{\Max(A)}(f)$, hence, $Z\in\Ff$. Consequently, $\ceros_{\Max(A)}[\J(\Ff)]=\Ff$. 

\noindent (3) An ideal $\gta$ of $A$ is said to be a \em $z_A$-ideal \em if $\J(\ceros_{\Max(A)}[\gta])=\gta$. Note  that each $z_A$-ideal is radical because $\ceros_{\Max(A)}(f)=\ceros_{\Max(A)}(f^k)$ for all $f\in A$ and all $k\geq 1$.

\noindent (4) Notice that the equality $\ceros_{\Max(A)}[\J(\Ff)]=\Ff$  implies that ${\mathcal J}(\Ff)$ is a $z_A$-ideal whenever $\Ff$ is a $z_A$-filter. 
\end{defnsprop}

\begin{example} \em  Let $\gta$ be a proper ideal of $A$. Let us check that: \em The family
$$
\ceros_{\Max(A)}[\gta]:=\{\ceros_{\Max(A)}(f):\ f\in\gta\}
$$ 
is a $z_A$-filter on $X$\em. Indeed, if $\varnothing\in\ceros_{\Max(A)}[\gta]$ there exists $f\in\gta$ such that $\ceros_{\Max(A)}(f)=\varnothing$, that is, $f\not\in\gtm$ for each maximal ideal $\gtm$ of $A$. Hence, $f$ is a unit in $A$, a contradiction.  In addition, let $f,g\in\gta$. Then,
$$
\ceros_{\Max(A)}(f)\cap\ceros_{\Max(A)}(g)=\ceros_{\Max(A)}(f^2+g^2)\in\ceros_{\Max(A)}[\gta]. 
$$
The inclusion 
$$
\ceros_{\Max(A)}(f)\cap\ceros_{\Max(A)}(g)=\ceros_{\Max(A)}(f^2+g^2)
$$ 
is evident. Conversely, let $\gtm\in\ceros_{\Max(A)}(f^2+g^2)$. Since $0\leq f^2(x)\leq f^2(x)+g^2(x)$ por every $x\in X$ it follows from Corollary \ref {convex1} (3) that $f^2\in\gtm$, and so $f\in\gtm$. Analogously, $g\in\gtm$. Thus $\gtm\in\ceros_{\Max(A)}(f)\cap\ceros_{\Max(A)}(g)$. Finally, let $\ceros_{\Max(A)}(g)\subset\ceros_{\Max(A)}(f)$ and $g\in\gta$. Then $\ceros_{\Max(A)}(f)=\ceros_{\Max(A)}(fg)\in\ceros_{\Max(A)}[\gta]$, because $fg\in\gta$.
\end{example}

We are ready to apply \L ojasiewicz inequality, \ref{lojasiewicz}, to prove the following Nullstellensatz for intermediate $\R$-algebras between ${\mathcal S}^{*}(X)$ and ${\mathcal S}(X)$.

\begin{cor}[Nullstellensatz]\label{radical} Let $A$ be an intermediate algebra whose maximal ideals are archimedean. Let $\gta$ be an ideal of $A$. Then, 

\noindent \em (1) \em $\gta$ is a $z_A$-ideal if and only if $\gta$ is radical. 

\noindent  \em (2) \em $\J(\ceros_{\Max(A)}[\gta])=\sqrt{\gta}$. 

\noindent  \em (3) \em In particular, if $\gtp$ is a prime ideal, then $\gtp$ is a $z_A$-ideal.
\end{cor}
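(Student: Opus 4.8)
The plan is to derive all three assertions from the \L ojasiewicz inequality of Proposition \ref{lojasiewicz} --- which is the only place where the hypothesis that the maximal ideals of $A$ are archimedean is used --- together with the elementary fact, already recorded in \ref{zAfiltros} (3), that $\ceros_{\Max(A)}(f)=\ceros_{\Max(A)}(f^k)$ for every $f\in A$ and every $k\geq1$. One may assume $\gta$ is a proper ideal, so that $\ceros_{\Max(A)}[\gta]$ is a genuine $z_A$-filter by the Example above; the case $\gta=A$ makes all three statements trivial.

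I would start with part (1). The implication ``$z_A$-ideal $\Rightarrow$ radical'' is exactly the remark in \ref{zAfiltros} (3): if $f^k\in\gta$ then $\ceros_{\Max(A)}(f)=\ceros_{\Max(A)}(f^k)\in\ceros_{\Max(A)}[\gta]$, whence $f\in\J(\ceros_{\Max(A)}[\gta])=\gta$. For the converse, assume $\gta$ is radical. The inclusion $\gta\subset\J(\ceros_{\Max(A)}[\gta])$ is immediate, so let $f\in\J(\ceros_{\Max(A)}[\gta])$; then there is some $g\in\gta$ with $\ceros_{\Max(A)}(f)=\ceros_{\Max(A)}(g)$, and in particular $\ceros_{\Max(A)}(g)\subset\ceros_{\Max(A)}(f)$. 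Proposition \ref{lojasiewicz} then yields $h\in A$ and an integer $\ell\geq1$ with $f^\ell=gh\in\gta$, and since $\gta$ is radical this gives $f\in\gta$. Hence $\J(\ceros_{\Max(A)}[\gta])=\gta$.

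For part (2), I would observe that $\sqrt{\gta}$ is radical, hence a $z_A$-ideal by part (1), so $\J(\ceros_{\Max(A)}[\sqrt{\gta}])=\sqrt{\gta}$; it then suffices to check the equality $\ceros_{\Max(A)}[\gta]=\ceros_{\Max(A)}[\sqrt{\gta}]$. The inclusion $\subset$ is clear from $\gta\subset\sqrt{\gta}$, and for $\supset$ one uses again that if $f\in\sqrt{\gta}$ with $f^k\in\gta$, then $\ceros_{\Max(A)}(f)=\ceros_{\Max(A)}(f^k)\in\ceros_{\Max(A)}[\gta]$. Therefore $\J(\ceros_{\Max(A)}[\gta])=\J(\ceros_{\Max(A)}[\sqrt{\gta}])=\sqrt{\gta}$. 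Part (3) is then immediate, since a prime ideal is radical and hence a $z_A$-ideal by part (1).

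As all the substance --- passing from an inclusion of zero sets in $\Max(A)$ to a divisibility relation in $A$ --- is already packaged in Proposition \ref{lojasiewicz}, I do not expect any real obstacle here. The only points requiring a little care are keeping straight which function plays the role of ``$f$'' and which of ``$g$'' when invoking the \L ojasiewicz inequality (here it is the generator $g\in\gta$ whose zero set is contained in that of the candidate member $f$), and the harmless reduction to proper ideals needed for $\ceros_{\Max(A)}[\gta]$ to be a bona fide $z_A$-filter.
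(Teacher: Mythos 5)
Your proposal is correct and follows essentially the same route as the paper: both directions of (1) rest on the \L ojasiewicz inequality of Proposition \ref{lojasiewicz} together with the identity $\ceros_{\Max(A)}(f)=\ceros_{\Max(A)}(f^k)$, and (3) is the same immediate consequence. The only (harmless) difference is in (2), where you deduce the Nullstellensatz from (1) applied to $\sqrt{\gta}$ plus the equality $\ceros_{\Max(A)}[\gta]=\ceros_{\Max(A)}[\sqrt{\gta}]$, whereas the paper reapplies the \L ojasiewicz inequality directly.
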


\begin{proof} (1) Let $\gta$ be a radical ideal of $A$ and let $g\in A$ such that $\ceros_{\Max(A)}(g)\in\ceros_{\Max(A)}[\gta]$. Then, there exists $f\in\gta$ such that $\ceros_{\Max(A)}(f)=\ceros_{\Max(A)}(g)$. By Proposition \ref{lojasiewicz} there exist $\ell\geq 1$ and $h\in A$ such that $g^\ell=fh\in\gta$. Since $\gta$ is a radical ideal, $g\in\gta$. Thus, radical ideals are $z_A$-ideals, and the converse was noticed above in \ref{zAfiltros} (3).

\noindent (2) For each $g\in\J(\ceros_{\Max(A)}[\gta])$ there exists $f\in\gta$ such that $\ceros_{\Max(A)}(g)=\ceros_{\Max(A)}(f)$ and so, by Proposition \ref{lojasiewicz}, $g^\ell=fh\in\gta$ for some integer $\ell\geq1$ and some function $h\in A$. Hence, $g\in\sqrt{\gta}$. Since the argument is reversible we deduce $\J(\ceros_{\Max(A)}[\gta])=\sqrt{\gta}$.

\noindent (3) Since prime ideals are radical we conclude that all prime ideals of $A$ are $z_A$-ideals.
\end{proof}

\section{Intermediate algebras generated by proper ideals}\label{s4}

As in the precedent sections, $X\subset\R^n$ is a semialgebraic set and $A$ is an intermediate $\R$-algebra $A$ between ${\mathcal S}^*(X)$ and ${\mathcal S}(X)$

\begin{defn} \em Let $\Lambda\subset{\mathcal S}(X)$ be a family of semialgebraic functions on $X$. Let ${\mathcal S}^{*}(X)[{\tt x_{\Lambda}}]$ be the polynomial ring with coefficients in ${\mathcal S}^{*}(X)$ in the variables ${\tt x}_{f}$ where $f\in\Lambda$. Let 
$$
\ev:{\mathcal S}^{*}(X)[{\tt x_{\Lambda}}]\to{\mathcal S}(X)
$$
be the homomorphism that fixes each function in ${\mathcal S}^{*}(X)$ and maps each variable ${\tt x}_{f}$ to $f$. We denote its image ${\mathcal S}^{*}(X)[\Lambda]$ and we call it the intermediate $\R$-algebra \em generated by $\Lambda$ \em over ${\mathcal S}^{*}(X)$. Clearly it is the smallest subring of ${\mathcal S}(X)$ containing ${\mathcal S}^{*}(X)\cup\Lambda$. In case $\Lambda$ is finite we say that ${\mathcal S}^{*}(X)[\Lambda]$ is \em finitely generated. \em 

In particular, if $\Lambda:=\{f\}$ is a singleton we write ${\mathcal S}^{*}(X)[\Lambda]:={\mathcal S}^{*}(X)[f]$, and we say that ${\mathcal S}^{*}(X)[f]$ is a \em simple \em extension of ${\mathcal S}^{*}(X)$.
\end{defn}

\begin{prop}[Convexity II]\label{convex2} \em (1) \em The $\R$-algebra $A$ is \em absolutely convex, \em that is, whenever $f,g\in{\mathcal S}(X)$ satisfy $|f(x)|\leq|g(x)|$ for every $x\in X$, and $g\in A$, then $f\in A$.
  
\noindent \em (2) \em Let $f\in{\mathcal S}(X)$. Then, $f\in A$ if and only if $|f|\in A$.

\noindent \em (3) \em  The intermediate $\R$-algebras between ${\mathcal S}^*(X)$ and ${\mathcal S}(X)$ are, exactly, the absolutely convex subrings of ${\mathcal S}(X)$ containing the constant functions. 
\end{prop}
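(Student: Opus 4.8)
The plan is to deduce all three parts from the ring structure of $A$ together with the inclusion ${\mathcal S}^*(X)\subset A$ and the elementary ``division by $1+g^2$'' trick already used in Proposition~\ref{saturado}(2) and Corollary~\ref{convex1}(3); no deeper input (not even the localization description of Theorem~\ref{loc1}) is needed.

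For part (1), let $f,g\in{\mathcal S}(X)$ with $|f(x)|\le|g(x)|$ for every $x\in X$ and $g\in A$. I would write $f=h\cdot(1+g^2)$ with $h:=f\cdot(1+g^2)^{-1}$. Since $1+g^2$ is a semialgebraic function that never vanishes, $h$ is again continuous and semialgebraic, and from $1+t^2\ge 2|t|$ for all $t\in\R$ one gets
$$
|h(x)|=\frac{|f(x)|}{1+g(x)^2}\le\frac{|g(x)|}{1+g(x)^2}\le\frac12\qquad\text{for every }x\in X,
$$
so $h\in{\mathcal S}^*(X)\subset A$. Because $A$ is a ring containing $g$, also $1+g^2\in A$, hence $f=h\cdot(1+g^2)\in A$. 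This is the whole of part (1); the only items worth a line are the continuity and semialgebraicity of $h$ (nonvanishing denominator, quotient of semialgebraic functions) and the displayed pointwise estimate. There is no real obstacle here.

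Part (2) is then immediate: both implications are instances of part (1), using that $f$ and $|f|$ are semialgebraic functions with the same pointwise absolute value, so that $f\in A\Leftrightarrow|f|\in A$ by applying (1) once in each direction.

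For part (3), one inclusion follows from part (1): an intermediate $\R$-algebra $A$ is absolutely convex by (1), and it contains $\R\subset{\mathcal S}^*(X)\subset A$, hence all constant functions. For the converse, let $B\subset{\mathcal S}(X)$ be an absolutely convex subring containing the constant functions. Then $B\subset{\mathcal S}(X)$ is given, and $B$ is an $\R$-algebra because it is a ring containing every constant, so $r\cdot f\in B$ whenever $r\in\R$ and $f\in B$. Finally ${\mathcal S}^*(X)\subset B$: if $f\in{\mathcal S}^*(X)$, pick $r\in\R$ with $|f(x)|\le r$ for all $x\in X$; the constant function $r$ lies in $B$ and $|f(x)|\le|r|$ pointwise, so absolute convexity of $B$ yields $f\in B$. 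The argument is routine throughout; the closest thing to a subtlety is the convention that ``subring'' means ``subring sharing the unit $1$'', which is automatic here since $1$ is a constant function.
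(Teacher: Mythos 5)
Your proposal is correct and follows essentially the same route as the paper: part (1) via the bounded function $h:=f\cdot(1+g^2)^{-1}\in{\mathcal S}^*(X)\subset A$ and $f=h\cdot(1+g^2)$, part (2) as an immediate consequence, and part (3) by bounding any $f\in{\mathcal S}^*(X)$ by a constant and invoking absolute convexity. No discrepancies worth noting.
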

\begin{proof} (1) The product $h:=f\cdot(1+g^2)^{-1}\in{\mathcal S}^*(X)$ because $|f(x)|\leq|g(x)|$ for each point $x\in X$. Thus $h\in A$ and so $f=h\cdot(1+g^2)\in A$ too.

\noindent (2) This is the immediate consequence of part (1). 

\noindent (3) Let $A$ be an absolutely convex subring of ${\mathcal S}(X)$ containing the constant functions. Let $f\in{\mathcal S}^*(X)$ and let $r\in\R$ be such that $|f(x)|\leq r$ for every $x\in X$.  As $A$ is absolutely convex we deduce that $f\in A$.
\end{proof}

\begin{prop}  \em (1) \em Let $A$ be a simple extension of ${\mathcal S}^{*}(X)$. Then, for every real number $r>1$ there exists $f\in{\mathcal S}(X)$ such that $f(x)\geq r$ for every point $x\in X$ and $A={\mathcal S}^{*}(X)[f]$.

\noindent \em (2) \em Let $f\in{\mathcal S}(X)$ be such that there exists a real number $r>1$ satisfying $f(x)\geq r$ for each $x\in X$. Then,
$$
{\mathcal S}^{*}(X)[f]=\{g\in{\mathcal S}(X): \text{ there exists  } k\in\Z^{+} \text{ such that }\, |g(x)|\leq f(x)^k \, \text{ for each point }\, x\in X\}.
$$
\noindent \em (3) \em Let $g_1,\dots,g_n\in{\mathcal S}(X)$. Then,
$$
{\mathcal S}^{*}(X)[g_1,\dots,g_n]={\mathcal S}^{*}(X)[|g_1|+\cdots+|g_n|].
$$
In particular, every finitely generated intermediate $\R$-algebra between ${\mathcal S}^{*}(X)$ and ${\mathcal S}(X)$ is simple.
\end{prop}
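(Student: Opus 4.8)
The plan is to derive all three parts from the absolute convexity of intermediate $\R$-algebras, Proposition \ref{convex2}, together with the elementary fact that if $f\geq r>1$ on $X$ then $f^k\geq r^k$, so the powers of $f$ eventually dominate any prescribed bound.

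\textbf{Part (1).} I would write the given simple extension as $A={\mathcal S}^{*}(X)[h]$ and set $f:=|h|+r$, so that $f(x)\geq r$ for every $x\in X$. First, $f\in A$: indeed $|h|\in A$ by Proposition \ref{convex2}(2) and the constant $r$ lies in ${\mathcal S}^{*}(X)\subset A$, hence ${\mathcal S}^{*}(X)[f]\subseteq A$. Conversely $|h|=f-r\in{\mathcal S}^{*}(X)[f]$, so $h\in{\mathcal S}^{*}(X)[f]$ by Proposition \ref{convex2}(2) applied to the algebra ${\mathcal S}^{*}(X)[f]$, and therefore $A={\mathcal S}^{*}(X)[h]\subseteq{\mathcal S}^{*}(X)[f]$. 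Thus $A={\mathcal S}^{*}(X)[f]$ with $f\geq r$, as required.

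\textbf{Part (2).} Denote by $B$ the set on the right-hand side. I would first check that $B$ is a subring of ${\mathcal S}(X)$ containing ${\mathcal S}^{*}(X)$ and $f$. Using $f\geq r>1$ one has $f^k\leq f^{k'}$ for $k\leq k'$ and $r^k\to\infty$; so given $g_1,g_2\in B$ with $|g_i|\leq f^{k_i}$ we may enlarge the $k_i$ so that $r^{k_1},r^{k_2}\geq 2$, whence $f^{k_1}+f^{k_2}\leq f^{k_1+k_2}$ and $|g_1+g_2|\leq f^{k_1+k_2}$; the product satisfies $|g_1g_2|\leq f^{k_1+k_2}$ directly; and any bounded semialgebraic function $g$ with $|g|\leq M$ satisfies $|g|\leq f^k$ as soon as $r^k\geq M$. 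This gives ${\mathcal S}^{*}(X)[f]\subseteq B$. For the reverse inclusion, take $g\in B$, say $|g(x)|\leq f(x)^k$ for all $x$; since $f^k\geq 0$ this reads $|g(x)|\leq|f^k(x)|$ with $f^k\in{\mathcal S}^{*}(X)[f]$, and ${\mathcal S}^{*}(X)[f]$ is absolutely convex by Proposition \ref{convex2}(3), so $g\in{\mathcal S}^{*}(X)[f]$. Hence $B={\mathcal S}^{*}(X)[f]$.

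\textbf{Part (3).} I would put $h:=|g_1|+\cdots+|g_n|$. Each $|g_i|$ lies in ${\mathcal S}^{*}(X)[g_1,\dots,g_n]$ by Proposition \ref{convex2}(2), hence so does $h$, and therefore ${\mathcal S}^{*}(X)[h]\subseteq{\mathcal S}^{*}(X)[g_1,\dots,g_n]$. Conversely, for each $i$ one has $|g_i(x)|\leq h(x)=|h(x)|$ with $h\in{\mathcal S}^{*}(X)[h]$, so absolute convexity (Proposition \ref{convex2}(3)) yields $g_i\in{\mathcal S}^{*}(X)[h]$; thus ${\mathcal S}^{*}(X)[g_1,\dots,g_n]\subseteq{\mathcal S}^{*}(X)[h]$ and the two algebras coincide. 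The final assertion follows at once, since a finitely generated intermediate algebra equals ${\mathcal S}^{*}(X)[g_1,\dots,g_n]={\mathcal S}^{*}(X)[h]$, a simple extension. I do not expect a genuine obstacle here: the only point needing a short computation rather than a one-line appeal to Proposition \ref{convex2} is the closure of $B$ under addition in part (2), which is precisely what the growth estimate $f^k\geq r^k\to\infty$ handles.
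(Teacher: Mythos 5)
Your argument is correct and follows essentially the same route as the paper: parts (1) and (3) are the paper's proofs verbatim (absolute convexity from Proposition \ref{convex2} plus the observation $|g|\leq f$, resp. $|g_j|\leq|g_1|+\cdots+|g_n|$), and in part (2) the inclusion of the growth-bounded set into ${\mathcal S}^{*}(X)[f]$ is the same appeal to absolute convexity applied to $|g|\leq f^k$. The only variation is the opposite inclusion in (2): the paper bounds an arbitrary polynomial expression $\sum_j g_jf^j$ with bounded coefficients by a single power $f^k$ using $f\geq r>1$, whereas you show the right-hand set is itself a subring containing ${\mathcal S}^{*}(X)$ and $f$ and invoke minimality of the generated algebra -- an equally elementary and equally valid computation.
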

\begin{proof} (1) Let $g\in A$ such that $A={\mathcal S}^{*}(X)[g]$. Since $g\in A$ it follows from Proposition \ref{convex2} that $|g|\in A$. As the constant function $r\in{\mathcal S}^{*}(X)\subset A$, the sum $f:=|g|+r\in A$. Consequently ${\mathcal S}^{*}(X)[f]\subset A$ and $f(x)\geq r$ for every point $x\in X$. To prove the converse inclusion note that since ${\mathcal S}^{*}(X)[f]$ is an intermediate $\R$-algebra between ${\mathcal S}^{*}(X)$ and ${\mathcal S}(X)$ it follows from Proposition \ref{convex2} that ${\mathcal S}^{*}(X)[f]$ is absolutely convex. In addition, $|g(x)|<f(x)=|f(x)|$ for each $x\in X$ and $f\in{\mathcal S}^{*}(X)[f]$. Therefore $g\in{\mathcal S}^{*}(X)[f]$. Thus $A={\mathcal S}^{*}(X)[g]\subset{\mathcal S}^{*}(X)[f]$ and the equality $A={\mathcal S}^{*}(X)[f]$ holds true.

\noindent (2) Since ${\mathcal S}^{*}(X)[f]$ is an absolutely convex intermediate $\R$-algebra between ${\mathcal S}^*(X)$ and ${\mathcal S}(X)$ that contains $f^k$ for every $k\in\Z^{+}$, the inclusion $\supset$ follows. Conversely, let $g\in {\mathcal S}^{*}(X)[f]$. Then there exists a nonnegative integer $n$ and $g_0,\dots,g_n\in{\mathcal S}^{*}(X)$ such that
$$
g(x)=\sum_{j=0}^ng_j(x)f(x)^j\, \text{ for every } x\in X.
$$
Let $c>1$ be a real number such that $|g_j(x)|<c$ for every $x\in X$ and $j=0,\dots,n$. Note that $\lim_{m\to\infty}\{r^m\}=+\infty$, so there exists $m_0\in\N$ such that $n+1,c<r^{m_0}\leq f(x)^{m_0}$ for every $x\in X$. Thus, the integer $k:=2m_0+n$ satisfies that for every $x\in X$,
$$
|g(x)|\leq\sum_{j=0}^n|g_j(x)|\cdot f(x)^j\leq f(x)^{m_0}\cdot\sum_{j=0}^nf(x)^n=(n+1)\cdot f(x)^{m_0}\cdot f(x)^{n}\leq f(x)^{2m_0+n}=f(x)^k.
$$ 
\noindent (3) Define $f:=|g_1|+\cdots+|g_n|$ and $A:={\mathcal S}^{*}(X)[g_1,\dots,g_n]$. Note that for each point $x\in X$ and each $j=1,\dots,n$ we have $|g_j(x)|\leq|g_1(x)|+\cdots+|g_n(x)|=|f(x)|$. Thus, by Proposition \ref{convex2} (1), each $g_j\in{\mathcal S}^{*}(X)[f]$. Hence, $A\subset{\mathcal S}^{*}(X)[f]$. For the converse inclusion note that, by Proposition \ref{convex2} (2), each $|g_j|\in A$, so $f\in A$ and ${\mathcal S}^{*}(X)[f]\subset A$.
\end{proof}

In \ref{zAfiltros} we studied filters of zerosets in the space of maximal ideals of an intermdiate $\R$-algebra between ${\mathcal S}^{*}(X)$ and ${\mathcal S}(X)$. Now we need similar results about filters of closed semialgebraic sets.

\subsection{Filters of closed semialgebraic subsets.}\label{Filters} Let $X\subset\R^n$ be a semialgebraic set and let $\|\cdot\|$ be the euclidean norm of $\R^n$. Let $Z$ be a closed semialgebraic subset of $X$. It was proved in \cite[2.2.8]{bcr} that the function
$$
f:X\to\R,\, x\mapsto\dist(x,Z)=\inf\, \{\|x-z\|:\, z\in Z\}
$$
is semialgebraic and, since $Z$ is closed in $X$, we have $Z=\Zz_X(f)$. Thus every closed semialgebraic subset of $X$ is the zeroset of a continuous semialgebraic function defined on $X$. 

\noindent (3) \label{zfilter} Let $\Zz_X$ be the collection of all closed semialgebraic subsets of $X$. Let $\Pp(\Zz_X)$ be the set of all subsets of $\Zz_X$. A subset $\Ff$ of $\Pp(\Zz_X)$ is a \em semialgebraic filter \em on $X$ if it satisfies the following properties:

\noindent (3.1) $\varnothing\not\in\Ff$.

\noindent (3.2) Given $Z_1,Z_2\in\Ff$ then $Z_1\cap Z_2\in\Ff$.

\noindent (3.3) Given $Z_1\in\Ff$ and $Z_2\in\Zz_X$ such that $Z_1\subset Z_2$ then $Z_2\in\Ff$.

\noindent (4) Let $\Ff$ be a semialgebraic filter on $X$. We define the \em closure \em of $\Ff$ in $\betaa X$ as the set
$$
\cl_{\betaa X}(\Ff):=\bigcap_{Z\in\Ff}\cl_{\betaa X}(Z).
$$
\noindent (5) Let $\gta$ be a proper ideal of ${\mathcal S}(X)$. Then:

\noindent (5.1) The family $\Zz_X[\gta]:=\{\Zz_X(f):\, f\in\gta\}$ is a semialgebraic filter on $X$.

The units of ${\mathcal S}(X)$ are those semialgebraic functions $f$ with empty zeroset since in such a case  $X\to \R,\, x\mapsto 1/f(x)$ is a well defined semialgebraic function. Thus $\varnothing\not\in\Zz_X[\gta]$ because $\gta$ is a proper ideal of ${\mathcal S}(X)$. In addition, if $f,g\in\gta$ satisfy $Z_1:=\Zz_X(f)$ and $Z_2:=\Zz_X(g)$ then $f^2+g^2\in\gta$ and $Z_1\cap Z_2=\Zz_X(f^2+g^2)\in\Zz_X(\gta)$. Finally, let $Z_1\in\Zz_X(\gta)$ and $Z_2\in\Zz_X$ be such that $Z_1\subset Z_2$. By (2) there exists $f\in{\mathcal S}(X)$ such that $Z_2=\Zz_X(f)$. Since $Z_1\in\Zz_X(\gta)$ there exists $g\in\gta$ such that $\Zz_1=\Zz_X(g)$. Thus, $h=fg\in\gta$ and $Z_2=\Zz_X(h)\in\Zz_X[\gta]$.

\noindent (5.2) If $\Ff$ is a semialgebraic filter on $X$, then ${\mathcal J}(\Ff):=\{f\in{\mathcal S}(X):\, \Zz_X(f)\in\Ff\}$ is a proper ideal of ${\mathcal S}(X)$ satisfying $\Zz_X[\J(\Ff)]=\Ff$.

Indeed, given $f,g\in{\mathcal J}(\Ff)$ their zero sets $Z_1:=\Zz_X(f)$ and $Z_2:=\Zz_X(g)$ belong to $\Ff$. Thus 
$$
\Zz_X(f^2+g^2)=\Zz_X(f)\cap\Zz_X(g)=Z_1\cap Z_2\in\Ff
$$
and $Z_1\cap Z_2\subset\Zz_X(f-g)$, so $\Zz_X(f-g)\in\Ff$, that is, $f-g\in{\mathcal J}(\Ff)$. Furthermore, given $f\in{\mathcal S}(X)$ and $g\in{\mathcal J}(\Ff)$ we have $\Zz_X(g)\in\Ff$ and $\Zz_X(g)\subset\Zz_X(fg)$. Thus  $\Zz_X(fg)\in\Ff$ and so $fg\in{\mathcal J}(\Ff)$. This proves that ${\mathcal J}(\Ff)$ is an ideal of ${\mathcal S}(X)$, and it is proper because $\Zz_X(1)=\varnothing\notin\Ff$.

Let us check the equality $\Zz_X[\J(\Ff)]=\Ff$. Given $Z\in\Ff$ there exists, by (2), $f\in{\mathcal S}(X)$ such that $Z=\Zz_X(f)$. Hence $f\in\J(\Ff)$ and $Z=\Zz_X(f)\in\Zz_X[\J(\Ff)]$. Conversely, let $Z\in\Zz_X[\J(\Ff)]$. Then there exists $f\in\J(\Ff)$ such that $Z=\Zz_X(f)\in\Ff$. 

\noindent (6) An ideal $\gta$ of ${\mathcal S}(X)$ is a \em $z$-ideal \em if $\J(\Zz_X[\gta])=\gta$, that is, whenever there exist $f\in\gta$ and $g\in{\mathcal S}(X)$ satisfying $\Zz_X(f)\subset\Zz_X(g)$, we have $g\in\gta$.

\begin{remark}\label{des0} \em Notice that the equality $\Zz_X[\J(\Ff)]=\Ff$ implies that ${\mathcal J}(\Ff)$ is a $z$-ideal whenever $\Ff$ is a semialgebraic filter, because given $f\in{\mathcal J}(\Ff)$ and $g\in{\mathcal S}(X)$ satisfying $\Zz_X(f)\subset\Zz_X(g)$ it follows that $\Zz_X(g)\in\Ff$ since $\Ff$ is a semialgebraic filter and $\Zz_X(f)\in\Ff$. Hence $g\in\J(\Ff)$. Thus, every semialgebraic filter $\Ff$ on $X$ has the form $\Ff=\Zz_X[\gta]$ for some $z$-ideal $\gta$ of ${\mathcal S}(X)$. 
\end{remark}

\subsection{Intermediate $\R$-algebras generated by proper ideals}

\begin{defn} \em  The intermediate $\R$-algebra between ${\mathcal S}^{*}(X)$ and ${\mathcal S}(X)$ \em generated \em  by the proper ideal $\gta$ of ${\mathcal S}(X)$ is 
$$
A:={\mathcal S}^{*}(X)+\gta:=\{f+g:f\in{\mathcal S}^{*}(X),\, g\in\gta\}.
$$ 
\end{defn}

We shall see in Proposition \ref{zclosure} that in the definition above we may assume that $\gta$ is a $z$-ideal. First we need two auxiliary lemmas.

\begin{lem} Let $\gta$ be an ideal of ${\mathcal S}(X)$. The \em $z$-closure \em of $\gta$ is the set
$$
{\ol\gta}^{z}:=\{f\in{\mathcal S}(X): \text{ there exists }\, g\in\gta\, \text{ such that }\, \Zz_X(g)=\Zz_X(f)\},
$$
which is the smallest $z$-ideal of ${\mathcal S}(X)$ containing $\gta$.
\end{lem}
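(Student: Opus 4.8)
The plan is to check, straight from the definition, that ${\ol\gta}^{z}$ is (i) an ideal of ${\mathcal S}(X)$, (ii) contains $\gta$, (iii) is a $z$-ideal, and (iv) is contained in every $z$-ideal of ${\mathcal S}(X)$ that contains $\gta$; items (i)--(iv) together give the assertion. Every one of these verifications will rest on nothing more than the elementary zeroset identities $\Zz_X(uv)=\Zz_X(u)\cup\Zz_X(v)$ and $\Zz_X(u^{2}+v^{2})=\Zz_X(u)\cap\Zz_X(v)$ for $u,v\in{\mathcal S}(X)$. (One could instead notice that ${\ol\gta}^{z}={\mathcal J}(\Zz_X[\gta])$ and invoke that $\Zz_X[\gta]$ is a semialgebraic filter, so ${\mathcal J}$ of it is a $z$-ideal containing $\gta$ by Remark~\ref{des0}; but that shortcut presupposes $\gta$ proper, whereas the direct argument below also covers the trivial case $\gta={\mathcal S}(X)$, and in any case minimality still needs a separate argument.)

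The inclusion $\gta\subset{\ol\gta}^{z}$ is immediate, taking $g:=f$ in the definition. Stability of ${\ol\gta}^{z}$ under multiplication by ${\mathcal S}(X)$ is also easy: given $f\in{\ol\gta}^{z}$ with a witness $g\in\gta$ (that is, $\Zz_X(g)=\Zz_X(f)$) and any $h\in{\mathcal S}(X)$, the element $gh$ lies in $\gta$ and $\Zz_X(gh)=\Zz_X(g)\cup\Zz_X(h)=\Zz_X(f)\cup\Zz_X(h)=\Zz_X(fh)$, so $fh\in{\ol\gta}^{z}$. The step I expect to be the only genuine obstacle is additive closure, because $\Zz_X(f_{1}+f_{2})$ is in general strictly larger than $\Zz_X(f_{1})\cap\Zz_X(f_{2})$, so there is no naive ``sum of witnesses''. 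The remedy is to enlarge a natural witness by $f_{1}+f_{2}$ itself: given $f_{1},f_{2}\in{\ol\gta}^{z}$ with witnesses $g_{1},g_{2}\in\gta$, set $g:=(g_{1}^{2}+g_{2}^{2})(f_{1}+f_{2})$, which lies in $\gta$ since $g_{1}^{2}+g_{2}^{2}\in\gta$; then $\Zz_X(g)=\big(\Zz_X(g_{1})\cap\Zz_X(g_{2})\big)\cup\Zz_X(f_{1}+f_{2})=\big(\Zz_X(f_{1})\cap\Zz_X(f_{2})\big)\cup\Zz_X(f_{1}+f_{2})=\Zz_X(f_{1}+f_{2})$, the last equality because $\Zz_X(f_{1})\cap\Zz_X(f_{2})\subset\Zz_X(f_{1}+f_{2})$. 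Hence $f_{1}+f_{2}\in{\ol\gta}^{z}$ and ${\ol\gta}^{z}$ is an ideal. The $z$-ideal property now follows from the same multiplicative trick: if $f\in{\ol\gta}^{z}$ has witness $g\in\gta$ and $h\in{\mathcal S}(X)$ satisfies $\Zz_X(f)\subset\Zz_X(h)$, then $gh\in\gta$ and $\Zz_X(gh)=\Zz_X(g)\cup\Zz_X(h)=\Zz_X(f)\cup\Zz_X(h)=\Zz_X(h)$, so $h\in{\ol\gta}^{z}$; by the characterization of $z$-ideals recalled above, ${\ol\gta}^{z}$ is a $z$-ideal.

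Finally, for minimality, let $\gtb$ be any $z$-ideal of ${\mathcal S}(X)$ with $\gta\subset\gtb$, and let $f\in{\ol\gta}^{z}$ with witness $g\in\gta\subset\gtb$. From $\Zz_X(g)=\Zz_X(f)$ we get in particular $\Zz_X(g)\subset\Zz_X(f)$, and since $\gtb$ is a $z$-ideal this forces $f\in\gtb$; thus ${\ol\gta}^{z}\subset\gtb$. Combining this with the previous paragraph, ${\ol\gta}^{z}$ is the smallest $z$-ideal of ${\mathcal S}(X)$ containing $\gta$, as claimed.
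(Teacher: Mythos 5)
Your proof is correct and follows essentially the same route as the paper: the same witness trick of multiplying the sum (the paper uses the difference $f_1-f_2$) by $g_1^2+g_2^2$ to handle additive closure, the same $\Zz_X(gh)=\Zz_X(g)\cup\Zz_X(h)$ computations for multiplicative closure and the $z$-ideal property, and the same one-line minimality argument.
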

\begin{proof} Let us see that ${\ol\gta}^{z}$ is an ideal of ${\mathcal S}(X)$.  First we prove that $f:=f_1-f_2\in{\ol\gta}^{z}$ for every $f_1,f_2\in{\ol\gta}^{z}$. Let $g_1,g_2\in\gta$ such that $\Zz_X(g_i)=\Zz_X(f_i)$ for $i=1,2$. Then $g:=g_1^2+g_2^2\in\gta$, so $fg\in\gta$ and 
\begin{equation*}
\begin{split}
\Zz_X(fg)&=\Zz_X(f)\cup\Zz_X(g)=\Zz_X(f)\cup(\Zz_X(g_1)\cap\Zz_X(g_2))=\Zz_X(f)\cup(\Zz_X(f_1)\cap\Zz_X(f_2))\\
&\subset\Zz_X(f)\cup\Zz_X(f_1-f_2)=\Zz_X(f)\subset\Zz_X(fg) ,
\end{split}
\end{equation*}
so $\Zz_X(f)=\Zz_X(fg)$, which implies that $f\in{\ol\gta}^{z}$. In addition, let $\ell\in{\ol\gta}^{z}$ and $h\in{\mathcal S}(X)$. There exists $g\in\gta$ with $\Zz_X(g)=\Zz_X(\ell)$, so $hg\in\gta$ and 
$$
\Zz_X(hg)=\Zz_X(h)\cup\Zz_X(g)=\Zz_X(h)\cup\Zz_X(\ell)=\Zz_X(h\ell).
$$ 
Thus $hf\in{\ol\gta}^{z}$. To show that ${\ol\gta}^{z}$ is a $z$-ideal note that given $f,g\in{\mathcal S}(X)$ with $f\in{\ol\gta}^{z}$ and $\Zz_X(f)\subset\Zz_X(g)$ there exists $h\in\gta$ such that $\Zz_X(f)=\Zz_X(h)$. Hence, $hg\in\gta$ and
$$
\Zz_X(hg)=\Zz_X(h)\cup\Zz_X(g)=\Zz_X(f)\cup\Zz_X(g)=\Zz_X(g)\subset\Zz_X(hg), 
$$
that is, $\Zz_X(g)=\Zz_X(hg)$ and $hg\in\gta$. Therefore, $g\in{\ol\gta}^{z}$. 

Finally, the inclusion $\gta\subset{\ol\gta}^{z}$ is evident and, if $\gtb$ is a $z$-ideal containing $\gta$ it also contains ${\ol\gta}^{z}$. In fact, for every $f\in{\ol\gta}^{z}$ there exists $g\in\gta$ such that $\Zz_X(f)=\Zz_X(g)$. Since $\gtb$ is a $z$-ideal containing $g$ it follows that $f\in\gtb$.
\end{proof}

\begin{lem}\label{quot1} Let $f,g\in{\mathcal S}(X)$ with $\Zz_X(g)\subset\Int_X(\Zz_X(f))$. Then, there exists $h\in{\mathcal S}(X)$ with $f=gh$ and $\Zz_X(f)\subset\Zz_X(h)$. 
\end{lem}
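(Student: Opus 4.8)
The plan is to exhibit $h$ explicitly as the quotient $f/g$ away from $\Zz_X(g)$, extended by zero on $\Zz_X(g)$, and then to use the hypothesis to see that this function is continuous and semialgebraic. Write $U:=\Int_X(\Zz_X(f))$, so that $U$ is open in $X$, the function $f$ vanishes identically on $U$, and $\Zz_X(g)\subset U$ by assumption. Define
$$
h(x):=\begin{cases} f(x)/g(x), & x\in X\setminus\Zz_X(g),\\ 0, & x\in\Zz_X(g).\end{cases}
$$
The first thing I would observe is that $h$ in fact vanishes on all of $U$: on $U\setminus\Zz_X(g)$ the first clause gives $f(x)/g(x)=0$ since $f|_U\equiv 0$, and on $\Zz_X(g)$ the second clause gives $0$.

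Next I would prove $h\in{\mathcal S}(X)$. For continuity I would glue on the open cover $\{X\setminus\Zz_X(g),\,U\}$ of $X$ --- it is a cover precisely because $\Zz_X(g)\subset U$. On the open set $X\setminus\Zz_X(g)$ the function $h=f/g$ is continuous, being a quotient of continuous functions with non-vanishing denominator; on the open set $U$ we have $h\equiv 0$; and the two prescriptions agree on the overlap $U\setminus\Zz_X(g)$. For semialgebraicity it suffices to note that the graph of $h$ equals
$$
\{(x,t)\in X\times\R:\ g(x)\neq 0,\ t\,g(x)=f(x)\}\ \cup\ \big(\Zz_X(g)\times\{0\}\big),
$$
which is a semialgebraic subset of $X\times\R$; together with continuity this yields $h\in{\mathcal S}(X)$.

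It then remains to verify the two asserted identities, both of which are immediate. For $f=gh$: if $g(x)\neq 0$ then $g(x)h(x)=g(x)\cdot f(x)/g(x)=f(x)$, while if $g(x)=0$ then $g(x)h(x)=0=f(x)$, the last equality because $\Zz_X(g)\subset\Zz_X(f)$. For $\Zz_X(f)\subset\Zz_X(h)$: if $f(x)=0$, then either $x\in\Zz_X(g)$, where $h(x)=0$ by definition, or $g(x)\neq 0$, where $h(x)=f(x)/g(x)=0$; in either case $h(x)=0$.

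The single non-formal point is the continuity of $h$ at the points of $\Zz_X(g)$, and this is exactly what the hypothesis $\Zz_X(g)\subset\Int_X(\Zz_X(f))$ provides: it forces $f$, and hence $h$, to vanish on a whole neighbourhood of $\Zz_X(g)$, so no cancellation issue can arise there. All the remaining steps are routine verifications.
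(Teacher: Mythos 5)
Your proof is correct and follows essentially the same route as the paper: define $h$ as $f/g$ off $\Zz_X(g)$ and $0$ on a neighbourhood, glue over the open cover $\{X\setminus\Zz_X(g),\,\Int_X(\Zz_X(f))\}$ using that $f$ vanishes on the overlap, and then check $f=gh$ and $\Zz_X(f)\subset\Zz_X(h)$ pointwise. Your explicit verification of semialgebraicity via the graph and of the two identities is just a slightly more detailed write-up of the paper's argument.
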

\begin{proof} The open semialgebraic subsets $U:=X\setminus\Zz_X(g)$ and $V:=\Int_X(\Zz_X(f))$ of $X$ cover $X$ and $f|_{U\cap V}\equiv0$. Thus, 
$$
h:X\to\R,\ x\mapsto\left\{
\begin{array}{cc}
\frac{f(x)}{g(x)}&\text{if $x\in U$,}\\[4pt]
0&\text{if $x\in V$,}\\
\end{array}
\right.
$$
is a semialgebraic function satisfying $f=gh$ and $\Zz_X(f)\subset\Zz_X(h)$.
\end{proof}

\begin{prop}\label{zclosure} Let $\gta$ be a proper ideal of ${\mathcal S}(X)$. Then
$$
{\mathcal S}^{*}(X)+\gta={\mathcal S}^{*}(X)+{\ol\gta}^{z}.
$$
Hence the intermediate $\R$-algebra generated by an ideal is also generated by a $z$-ideal.
\end{prop}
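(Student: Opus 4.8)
The inclusion ${\mathcal S}^{*}(X)+\gta\subseteq{\mathcal S}^{*}(X)+{\ol\gta}^{z}$ is immediate since $\gta\subseteq{\ol\gta}^{z}$, so the whole content lies in the reverse inclusion, and for that it clearly suffices to prove that every $f\in{\ol\gta}^{z}$ already lies in ${\mathcal S}^{*}(X)+\gta$. The plan is to exhibit such an $f$ as (bounded semialgebraic function) $+$ (element of $\gta$).

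First I would fix the bounded part. Set $s:=f/\max\{|f|,1\}$. Since $\max\{|f|,1\}$ is a semialgebraic continuous function that is everywhere $\geq 1$, its reciprocal is bounded and semialgebraic, so $s\in{\mathcal S}^{*}(X)$ with $|s|\leq 1$, and $\psi:=f-s\in{\mathcal S}(X)$. A pointwise check shows $\psi(x)=0$ precisely when $|f(x)|\leq 1$: if $|f(x)|\leq1$ then $\max\{|f(x)|,1\}=1$ and $\psi(x)=f(x)-f(x)=0$; if $|f(x)|>1$ then $\psi(x)=f(x)\bigl(1-1/|f(x)|\bigr)\neq 0$. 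Hence $\Zz_X(\psi)=\{x\in X:|f(x)|\leq1\}$, the open semialgebraic set $\{x\in X:|f(x)|<1\}$ is contained in $\Zz_X(\psi)$, hence in $\Int_X(\Zz_X(\psi))$, and in particular $\Zz_X(f)\subseteq\Int_X(\Zz_X(\psi))$.

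Next I would invoke the definition of the $z$-closure: there exists $g\in\gta$ with $\Zz_X(g)=\Zz_X(f)$. Combining with the previous paragraph, $\Zz_X(g)=\Zz_X(f)\subseteq\{x\in X:|f(x)|<1\}\subseteq\Int_X(\Zz_X(\psi))$, which is exactly the hypothesis of Lemma \ref{quot1} applied to the pair $(\psi,g)$. That lemma produces $h\in{\mathcal S}(X)$ with $\psi=gh$, and since $g\in\gta$ this gives $\psi\in\gta$. Therefore $f=s+\psi\in{\mathcal S}^{*}(X)+\gta$, which proves ${\ol\gta}^{z}\subseteq{\mathcal S}^{*}(X)+\gta$; adding ${\mathcal S}^{*}(X)$ to both sides yields the stated equality. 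The closing remark of the statement then follows because ${\ol\gta}^{z}$ is a $z$-ideal by the preceding lemma, and it is proper whenever $\gta$ is, as a $z$-ideal containing $1$ would contain a function with empty zeroset, i.e.\ a unit.

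I do not expect a genuine obstacle; the only delicate point is the choice of the bounded truncation $s$, which must simultaneously be bounded and semialgebraic, be honestly defined in ${\mathcal S}(X)$ (no division by a function with zeros), and be close enough to $f$ that $f-s$ vanishes on a full \emph{neighbourhood} of $\Zz_X(f)$, wide enough to contain $\Zz_X(g)$ in its interior so that Lemma \ref{quot1} can be applied. The truncation $f/\max\{|f|,1\}$ meets all three requirements; naive alternatives such as $f/(1+f^2)$ fail the third, since $f-f/(1+f^2)$ has exactly the same zeroset as $f$.
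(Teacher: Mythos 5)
Your argument is correct and follows essentially the same route as the paper: decompose an element of ${\ol\gta}^{z}$ into a bounded truncation plus a remainder that vanishes on a whole neighbourhood of its zeroset, and then apply Lemma \ref{quot1} (with the comparison $\Zz_X(g)=\Zz_X(f)\subset\Int_X(\Zz_X(\psi))$ for some $g\in\gta$) to write that remainder as a multiple of an element of $\gta$. The only difference is in the truncation device: the paper multiplies $h$ by a semialgebraic Urysohn bump function that is $0$ on $\{|h|\leq 1/2\}$ and $1$ on $\{|h|\geq 1\}$, whereas your explicit choice $s=f/\max\{|f|,1\}$ produces the same splitting directly and avoids invoking that separation result.
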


\begin{proof}The inclusion $\subset$ follows because $\gta\subset{\ol\gta}^{z}$. Conversely, let $f:=g+h$ where $g\in{\mathcal S}^{*}(X)$ and $h\in{\ol\gta}^{z}$. Consider the disjoint closed semialgebraic subsets of $X$ 
$$
C_1:=\{x\in X:\, |h(x)|\leq1/2\} \, \text{ and }\, C_2:=\{x\in X:\, |h(x)|\geq1\}.
$$
By \cite[Cor. 2.4]{fg2} there exists $f\in{\mathcal S}^{*}(X)$ such that $f|_{C_1}\equiv0$ and $f|_{C_2}\equiv1$. In addition, after substituting $f$ by the quotient 
$$
\frac{2|f|}{1+f^2}
$$
we may assume that $0\leq f(x)\leq1$ for every point $x\in X$. Define $h_1:=f\cdot h$ and let us show that the semialgebraic function $h-h_1$ is bounded. Pick a point $x\in X$. If $|h(x)|\geq1$ then $x\in C_2$ and so $f(x)=1$. Thus $h_1(x)=h(x)$ and $(h-h_1)(x)=0$. On the other hand, if $|h(x)|\leq1$ then
$$
|(h-h_1)(x)|=|h(x)|\cdot|1-f(x)|\leq1
$$ 
because $0\leq f(x)\leq1$. Hence, $h-h_1\in{\mathcal S}^{*}(X)$, as wanted. In addition, as $h\in{\ol\gta}^{z}$ there exists $\ell\in\gta$ such that $\Zz_X(h)=\Zz_X(\ell)$. Consider the open semialgebraic subset
$$
U:=\{x\in X: |h(x)|<1/2\}. 
$$ 
Then, 
$$
\Zz_X(\ell)=\Zz_X(h)\subset U\subset C_1\subset\Zz_X(f)\subset\Zz_X(h_1),
$$
which implies $\Zz_X(\ell)\subset\Int_X(\Zz_X(h_1))$.  By Lemma \ref{quot1} there exists $h_2\in{\mathcal S}(X)$ such that $h_1=h_2\cdot\ell$. Finally,
$$
f=(f-h_1)+h_2\cdot\ell=(g+h-h_1)+h_2\cdot\ell\in{\mathcal S}^{*}(X)+\gta
$$
because both $g$ and $h-h_1$ are bounded and $h_2\cdot\ell\in\gta$.
\end{proof}

\begin{defn} \em Let $\Ff$ be a semialgebraic $z$-filter on the semialgebraic set $X$. 

\noindent (1) Given a semialgebraic subset $Y\subset X$ we define
$$
{\mathcal S}^{*}(X\,|\,Y):=\{f\in{\mathcal S}(X) \text{ such that } f|_{Y} \text{ is bounded}\}.
$$ 
(2) We denote
$$
{\mathcal S}^{*}(X\,|\,\Ff):=\bigcup_{Z\in\Ff}{\mathcal S}(X\,|\,Z).
$$
\end{defn}

\begin{prop}\label{acotadas} \em  Let $\gta$ be a proper ideal of ${\mathcal S}(X)$. Then ${\mathcal S}^{*}(X)+\gta={\mathcal S}^{*}(X\,|\,\Zz_X[\gta])$.
\end{prop}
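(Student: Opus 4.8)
The plan is to establish the two inclusions separately. The inclusion $\subseteq$ is essentially immediate: if $f=u+h$ with $u\in{\mathcal S}^{*}(X)$ and $h\in\gta$, then $h$ vanishes on $Z:=\Zz_X(h)$, so $f|_{Z}=u|_{Z}$ is bounded; since $Z\in\Zz_X[\gta]$, this shows $f\in{\mathcal S}^{*}(X\,|\,Z)\subset{\mathcal S}^{*}(X\,|\,\Zz_X[\gta])$.

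For the reverse inclusion I would argue by a truncation modelled on the proof of Proposition \ref{zclosure}. Start with $f\in{\mathcal S}^{*}(X\,|\,\Zz_X[\gta])$, so there are $g\in\gta$ and a real number $M>0$ with $|f(x)|\le M$ for every $x\in\Zz_X(g)$. The idea is to peel off the part of $f$ living where $|f|$ is large, a region disjoint from $\Zz_X(g)$, and to recognise it as a multiple of $g$. Concretely, I would consider the disjoint closed semialgebraic subsets
$$
C_1:=\{x\in X:\,|f(x)|\le M+1\}\qquad\text{and}\qquad C_2:=\{x\in X:\,|f(x)|\ge M+2\}
$$
of $X$, noting that $\Zz_X(g)\subset\{|f|\le M\}\subset\Int_X(C_1)$. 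By \cite[Cor. 2.4]{fg2} there is $\varphi\in{\mathcal S}^{*}(X)$ with $\varphi|_{C_1}\equiv0$ and $\varphi|_{C_2}\equiv1$, and after replacing $\varphi$ by $2|\varphi|/(1+\varphi^{2})$ we may assume $0\le\varphi\le1$ on $X$. Set $h:=\varphi\cdot f$. Then $h|_{C_1}\equiv0$, so $\Zz_X(g)\subset\Int_X(C_1)\subset\Int_X(\Zz_X(h))$; and $f-h=(1-\varphi)f$ is bounded, since it vanishes on $C_2$ while $|f|<M+2$ off $C_2$ and $0\le1-\varphi\le1$. Now Lemma \ref{quot1}, applied to the pair $(h,g)$, yields $k\in{\mathcal S}(X)$ with $h=g\cdot k$, hence $h\in\gta$; therefore $f=(f-h)+h\in{\mathcal S}^{*}(X)+\gta$, as wanted.

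The delicate point --- and the step I would single out as the main obstacle --- is ensuring that $\Zz_X(g)$ lies in the \emph{interior} of $\Zz_X(h)$, which is precisely the hypothesis Lemma \ref{quot1} needs; this is why one must force $\varphi$ (hence $h$) to vanish on the whole buffer set $C_1=\{|f|\le M+1\}$, not merely on $\Zz_X(g)$ or on $\{|f|\le M\}$. As an alternative entry point, one could first invoke Proposition \ref{zclosure} together with the evident identity $\Zz_X[\gta]=\Zz_X[\,{\ol\gta}^{z}]$ to reduce to the case that $\gta$ is a $z$-ideal; this reduction, however, does not appreciably simplify the truncation above, so I would present the argument directly.
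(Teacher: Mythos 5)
Your argument is correct, but it follows a genuinely different route from the paper's for the nontrivial inclusion. The paper takes $f$ bounded on $\Zz_X(h)$, $h\in\gta$, and invokes the semialgebraic Tietze--Urysohn extension theorem of Delfs--Knebusch \cite{dk} to produce $f_1\in{\mathcal S}^{*}(X)$ with $f_1|_{\Zz_X(h)}=f|_{\Zz_X(h)}$; then $\Zz_X(h)\subset\Zz_X(f-f_1)$ gives $\Zz_X(f-f_1)=\Zz_X\bigl(h\cdot(f-f_1)\bigr)$ with $h\cdot(f-f_1)\in\gta$, so $f-f_1\in\ol{\gta}^{z}$, and the conclusion follows from Proposition \ref{zclosure}. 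You instead truncate $f$ itself where $|f|$ is large: the buffer sets $C_1=\{|f|\le M+1\}$, $C_2=\{|f|\ge M+2\}$, the separating function from \cite[Cor. 2.4]{fg2}, and Lemma \ref{quot1} applied to $h=\varphi f$ and $g$ (your interior inclusion $\Zz_X(g)\subset\{|f|\le M\}\subset\Int_X(C_1)\subset\Int_X(\Zz_X(\varphi f))$ is exactly what the lemma needs, and you correctly identified it as the delicate point). In effect you re-run the mechanism of the proof of Proposition \ref{zclosure} directly on $f$, which buys you independence from the Delfs--Knebusch extension theorem and from the $z$-closure detour, at the cost of redoing the cut-off construction; the paper's version is shorter precisely because Proposition \ref{zclosure} has already absorbed that work, while yours is more self-contained within the separation-plus-division toolkit already present in the paper. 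Both proofs are valid; your reduction remark (that one may assume $\gta$ is a $z$-ideal via $\Zz_X[\gta]=\Zz_X[\ol{\gta}^{z}]$ and Proposition \ref{zclosure}) is also sound, though, as you say, it does not shorten the truncation.
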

\begin{proof} Let $f\in{\mathcal S}^{*}(X)+\gta$. Then there exist $g\in{\mathcal S}^{*}(X)$ and $h\in\gta$ such that $f=g+h$. Thus $f\equiv g$ on $\Zz_X(h)$, and so $f$ is bounded on $\Zz_X(h)\in\Zz_X[\gta]$, i.e. $f\in{\mathcal S}^{*}(X\,|\,\Zz_X[\gta])$.

Conversely, let $f\in{\mathcal S}(X)$ that is bounded on the set $\Zz_X(h)$ for some function $h\in\gta$. Thus 
$f|_{\Zz_X(h)}:\Zz_X(h)\to\R$ is a bounded semialgebraic function on the closed semialgebraic subset $\Zz_X(h)$ of $X$. By the semialgebraic version of the Tietze--Urysohn Lemma due to Delfs and Knebusch, see \cite{dk}, there exists $f_1\in{\mathcal S}^{*}(X)$ such that $f_1|_{\Zz_X(h)}=f|_{\Zz_X(h)}$. Consequently, $\Zz_X(h)\subset\Zz_X(f-f_1)$. Hence $p:=h\cdot(f-f_1)\in\gta$ and $\Zz_X(f-f_1)=\Zz_X(p)$, which implies that $f-f_1\in\ol{\gta}^{z}$. Therefore, by Proposition \ref{zclosure},
$$
f=f_1+(f-f_1)\in{\mathcal S}^{*}(X)+\ol{\gta}^{z}={\mathcal S}^{*}(X)+\gta.
$$
\end{proof}

\begin{thm}\label{EXT} Let $\Ff$ be a semialgebraic filter on $X$. Then:

\noindent \em (1) \em There exists a $z$-ideal $\gta$ of ${\mathcal S}(X)$ such that  $\Ff=\Zz_X[\gta]$.

\noindent \em (2) \em For every $f\in{\mathcal S}^{*}(X)+\gta$ there exists a continuous function $F:X\cup\cl_{\betaa X}(\Ff)\to\R$ such that $F|_X=f$.

\noindent \em (3) \em Let $\gtm$ be a maximal ideal of ${\mathcal S}(X)$. Then, for every $f\in{\mathcal S}^{*}(X)+\gtm$ there exists a continuous function $F:X\cup\{\gtm\}\to\R$ such that $F|_X=f$.
\end{thm}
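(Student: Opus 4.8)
My plan is to reduce all three parts to a single extension‑by‑zero lemma for (possibly unbounded) semialgebraic functions. Part (1) requires nothing new: I would take $\gta:=\J(\Ff)=\{f\in{\mathcal S}(X):\Zz_X(f)\in\Ff\}$, which by \ref{Filters} (5.2) is a proper ideal of ${\mathcal S}(X)$ with $\Zz_X[\gta]=\Ff$ and which by Remark \ref{des0} is a $z$-ideal. I will keep this particular $\gta$ throughout; note that by Proposition \ref{acotadas} one has ${\mathcal S}^{*}(X)+\gta={\mathcal S}^{*}(X\,|\,\Ff)$.

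The core of the argument is the following lemma, which I would establish first. \emph{If $h\in{\mathcal S}(X)$ and $Z:=\Zz_X(h)$, then the map $\widetilde h\colon X\cup\cl_{\betaa X}(Z)\to\R$ determined by $\widetilde h|_X=h$ and $\widetilde h\equiv0$ on $\cl_{\betaa X}(Z)\setminus X$ is well defined and continuous.} Well-definedness is immediate since $\cl_{\betaa X}(Z)\cap X=\cl_X(Z)=Z$ and $h|_Z=0$. For continuity, at a point of $X\setminus Z$ there is nothing to do because $\cl_{\betaa X}(Z)$ is closed in $\betaa X$ and misses $X\setminus Z$, so in a neighbourhood the space $X\cup\cl_{\betaa X}(Z)$ reduces to the open subset $X\setminus Z$ of $X$; at a point of $Z$ one splits an approaching net into its portion in $X$ (where $\widetilde h=h$ is continuous and tends to $0$) and its portion in $\cl_{\betaa X}(Z)\setminus X$ (where $\widetilde h\equiv0$). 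The one subtle case is a point $\xi\in\cl_{\betaa X}(Z)\setminus X$: here $h$ may be unbounded near $\xi$, so it has no continuous extension to $\betaa X$ and the naive approach fails. The fix I would use is that for every $\veps>0$ the truncation $k_\veps:=\min(|h|,\veps)$ belongs to ${\mathcal S}^{*}(X)$, hence by \cite[4.4]{fg2} extends to a continuous $\widehat{k_\veps}\colon\betaa X\to\R$; as $k_\veps$ vanishes on the dense subset $Z$ of $\cl_{\betaa X}(Z)$, $\widehat{k_\veps}$ vanishes on $\cl_{\betaa X}(Z)$, in particular $\widehat{k_\veps}(\xi)=0$. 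Then $U_\veps:=\{\widehat{k_\veps}<\veps\}$ is an open neighbourhood of $\xi$ in $\betaa X$ on which, intersected with $X$, $k_\veps<\veps$ and therefore $|h|<\veps$ (since $k_\veps\equiv\veps$ wherever $|h|\ge\veps$), while $\widetilde h\equiv0$ on $U_\veps\cap(\cl_{\betaa X}(Z)\setminus X)$; hence $|\widetilde h|<\veps$ on the whole neighbourhood $U_\veps\cap(X\cup\cl_{\betaa X}(Z))$ of $\xi$.

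With the lemma in hand, part (2) is quick: writing $f=g+h$ with $g\in{\mathcal S}^{*}(X)$ and $h\in\gta$, and putting $Z:=\Zz_X(h)\in\Zz_X[\gta]=\Ff$, one has $\cl_{\betaa X}(\Ff)=\bigcap_{Z'\in\Ff}\cl_{\betaa X}(Z')\subseteq\cl_{\betaa X}(Z)$, so, with $\widehat g\colon\betaa X\to\R$ the continuous extension of $g$ from \cite[4.4]{fg2}, the function $F:=(\widehat g+\widetilde h)|_{X\cup\cl_{\betaa X}(\Ff)}$ is well defined (the domain of $\widetilde h$ contains $X\cup\cl_{\betaa X}(\Ff)$) and continuous, and $F|_X=g+h=f$. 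For part (3) I would first observe that $\gtm$ is necessarily a $z$-ideal: its $z$-closure ${\ol\gtm}^{z}$ is a $z$-ideal containing $\gtm$, and it is proper (otherwise some $g\in\gtm$ would have empty zeroset, hence be a unit of ${\mathcal S}(X)$), so ${\ol\gtm}^{z}=\gtm$ by maximality. Taking $\Ff:=\Zz_X[\gtm]$ — a semialgebraic filter by \ref{Filters} (5.1), with $\J(\Ff)={\ol\gtm}^{z}=\gtm$ — part (2) with $\gta=\gtm$ extends $f\in{\mathcal S}^{*}(X)+\gtm$ to a continuous function on $X\cup\cl_{\betaa X}(\Ff)$. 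Applying Theorem \ref{stone} to $A={\mathcal S}(X)$, the homeomorphism $\rho_{{\mathcal S}(X)}\colon\Max({\mathcal S}(X))\to\betaa X$ is the identity on $X$ and sends $\gtm$ to a point $\gtm^{*}\in\betaa X$; moreover $\gtm^{*}\in\cl_{\betaa X}(\Ff)$, since for $h\in\gtm$ and $u\in{\mathcal S}^{*}(X)$ with $u|_{\Zz_X(h)}=0$ we get $\Zz_X(h)\subseteq\Zz_X(u)$, hence $u\in\gtm\cap{\mathcal S}^{*}(X)\subseteq\gtm^{*}$ ($\gtm$ being a $z$-ideal), i.e. $\gtm^{*}\in\cl_{\betaa X}(\Zz_X(h))$ for every $h\in\gtm$. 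Identifying $X\cup\{\gtm\}$ with the subspace $X\cup\{\gtm^{*}\}$ of $X\cup\cl_{\betaa X}(\Ff)$ through $\rho_{{\mathcal S}(X)}$ and restricting the extension from (2) yields the desired $F$.

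I expect the main obstacle to be the lemma, and within it the continuity of $\widetilde h$ at points of $\cl_{\betaa X}(\Zz_X(h))\setminus X$: the difficulty is that $h$ need not be bounded there, so one cannot invoke its Stone--\v{C}ech extension, and the whole point is to pass to the truncations $\min(|h|,\veps)\in{\mathcal S}^{*}(X)$, which still control the set $\{|h|<\veps\}$. A minor additional point is the bookkeeping in (3) identifying the abstract maximal ideal $\gtm$ of ${\mathcal S}(X)$ with a point of $\betaa X$ lying inside $\cl_{\betaa X}(\Zz_X[\gtm])$, which is handled by Theorem \ref{stone} (applied to ${\mathcal S}(X)$ itself) together with the observation that $\gtm$ is automatically a $z$-ideal.
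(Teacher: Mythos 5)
Your proposal is correct, and on parts (2)--(3) it follows a genuinely different route from the paper's. Part (1) is identical. For (2), the paper uses Proposition \ref{acotadas} to see that $f$ is bounded on some $Z\in\Ff$, extends $f|_Z$ over $\cl_{\betaa X}(Z)$ via \cite[4.4]{fg2} together with the homeomorphism $\betaa Z\cong\cl_{\betaa X}(Z)$ of \cite[6.3-5]{fg3}, and glues this with $f$ on $X$, asserting continuity of the glued function $G$; you instead split $f=g+h$ with $g\in{\mathcal S}^*(X)$ and $h\in\gta$, extend $g$ over all of $\betaa X$, and extend $h$ by zero over $\cl_{\betaa X}(\Zz_X(h))$, proving continuity at points of $\cl_{\betaa X}(\Zz_X(h))\setminus X$ through the truncations $\min(|h|,\veps)\in{\mathcal S}^*(X)$. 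This buys two things: you bypass Proposition \ref{acotadas} and the cited homeomorphism from \cite{fg3}, and, more significantly, your truncation lemma supplies exactly the continuity verification at $\cl_{\betaa X}(Z)\setminus X$ that the paper's $G$ is asserted to enjoy without further argument (indeed, since the paper's $\widehat{f|_Z}$ is the restriction to $\cl_{\betaa X}(Z)$ of the Stone-\v{C}ech extension of a bounded Tietze extension of $f|_Z$, the paper's continuity claim is equivalent to your lemma); the paper's route, in exchange, is shorter given the machinery it cites. For (3) the paper merely applies (2) to $\gta:=\gtm$, and your extra bookkeeping is correct and makes the implicit content precise: $\gtm$ is a $z$-ideal (its $z$-closure is proper, hence equals $\gtm$), under Theorem \ref{stone} applied to $A={\mathcal S}(X)$ it corresponds to a point $\gtm^*\in\betaa X$, and $\gtm^*\in\cl_{\betaa X}(\Zz_X[\gtm])$ because any $u\in{\mathcal S}^*(X)$ vanishing on some $\Zz_X(h)$ with $h\in\gtm$ lies in $\gtm\cap{\mathcal S}^*(X)\subset\gtm^*$ (here you are implicitly using that the Zariski closure of $\Zz_X(h)$ in $\Max({\mathcal S}^*(X))$ consists of the maximal ideals containing all bounded functions vanishing on $\Zz_X(h)$, which is exactly right). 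Both your argument and the paper's quietly use that $X\hookrightarrow\betaa X$ is a topological embedding, standard from \cite{fg2}, so this is not a gap relative to the paper.
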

\begin{proof} (1) This has been proved in \ref{Filters} (5.2) and Remark \ref{des0}.  

\noindent (2) By part (1) and Proposition \ref{acotadas},
$$
{\mathcal S}^{*}(X\,|\,\Ff)={\mathcal S}^{*}(X\,|\,\Zz_X[\gta])={\mathcal S}^{*}(X)+\gta.
$$ 
Thus there exists $Z\in\Ff$ such that the restriction $f|_{Z}$ is bounded. Hence, by \ref{stone1} (4), there exists a unique continuous extension ${\widehat f}:\betaa Z\to\R$ of $f|_{Z}$. Let ${\varphi}:{\mathcal S}^{*}(X)\to{\mathcal S}^{*}(Z),\, f\mapsto f|_{Z}$. It was proved in \cite[6.3-5]{fg3} that the map
$$
{\widetilde{\varphi}}:\betaa Z\to\cl_{\betaa X}(Z),\, \gtp\mapsto{\varphi}^{-1}(\gtp)
$$
is a homeomorphism. Therefore, with an obvious abuse of notation, there exists a continuous extension ${\widehat f}:\cl_{\betaa X}(Z)\to\R$ of $f|_{Z}$. In addition $X\cap\cl_{\betaa X}(Z)=\cl_{X}(Z)=Z$ and ${\widehat f}|_{Z}=f|_Z$. Thus the function
$$
G:X\cup\cl_{\betaa X}(Z)\to\R,\, x\mapsto\left\{ \begin{array}{cl}  f(x) & \mbox{ if } x\in X, \\[2pt]   {\widehat f}(x)& \mbox{ if } x\in\cl_{\betaa X}(Z),\\ 
\end{array} \right.  
$$
is a well defined continuous function and $F:=G|_{X\cup\cl_{\betaa X}(\Ff)}$ satisfies $F|_X=f$.

\noindent (3) It is enough to apply part (2) to the ideal $\gta:=\gtm$. 
\end{proof}

\begin{com} \em It is a natural question to ask if a given intermediate $\R$-algebra $A$ containing ${\mathcal S}^*(X)$ and contained in ${\mathcal S}(X)$ for some semialgebraic set $X$ is isomorphic to ${\mathcal S}(Y)$ for some semialgebraic set $Y$. To provide examples in which the answer is negative we introduce right now the notion of \em real closed ring, \em see \cite[Definition 1]{cd}.
\end{com}

\begin{defn} \em A commutative ordered ring $A$ with unit that is not a field is said to be \em real closed \em if given $a,b\in A$ with $a<b$ and a polynomial $p\in A[\t]$ such that $p(a)\cdot p(b)<0$ then there exists $c\in A$ such that $a<c<b$ an $p(c)=0$.
\end{defn}

It was proved by N. Schwartz in \cite[§III.1]{s} that for every semialgebraic set $X$ the rings ${\mathcal S}^*(X)$ and ${\mathcal S}(X)$ partially ordered by: $f\leq g$ if $f(x)\leq g(x)$ for every point $x\in X$, are real closed rings.

\begin{examples} \em (1) Let $f:\R\to\R,\, x\mapsto |x|$, which is a continuous semialgebraic function. Then, the simple extension $A:={\mathcal S}^*(\R)[f]$ is not isomorphic to either ${\mathcal S}^*(Y)$ or ${\mathcal S}(Y)$ for every semialgebraic set $Y$ because $A$ is not a real closed ring. Indeed the polynomial $p(\t):=\t^2-f$ satisfies $p(0)=-f<0$ and $p(1+f)=f^2+f+1>0$, but $p$ has no root in $A$ since the semialgebraic function $g(x):=\sqrt{|x|}$ does not belong to $A$.

\noindent (2) Consider the continuous semialgebraic function
$$
f:\R\to\R,\ x\mapsto\left\{
\begin{array}{cc}
0&\text{if $x\leq0$,}\\[4pt]
x&\text{if $x\geq0$,}\\
\end{array}
\right.
$$
and the ideal $\gta:=f\cdot{\mathcal S}(\R)$. Then, the intermediate algebra $A:={\mathcal S}^*(\R)+\gta$ is not isomorphic to either ${\mathcal S}^*(Y)$ or ${\mathcal S}(Y)$ for every semialgebraic set $Y$ because $A$ is not a real closed ring. Indeed, the polynomial $p(\t):=\t^2-f$ satisfies $p(0)=-f<0$ and $p(1+f)=f^2+f+1>0$, but $p$ has no root in $A$. Suppose, by way of contradiction that there exists $g\in{\mathcal S}^*(\R)$ and $h\in{\mathcal S}(\R)$ such that $p(g+fh)=0$. Thus, $g(t)+t\cdot h(t)=\sqrt{t}$ for every real number $t\geq0$, that is,
$$
g(t)=\sqrt{t}\cdot(1+\sqrt{t}\cdot h(t))\ \text{ for every real number }\ t>0,
$$
which is a contradiction since $g$ is bounded.

\noindent (3) In the setting of classical rings of continuous functions it is known that if $\gtm$ is a non-real maximal ideal in ${\mathcal C}(X)$, then ${\mathcal C}(X)$ cannot be obtained by adjoining countably many elements to the subalgebra $\R+\gtm$. We have not been able to prove or disprove this result in the semialgebraic setting. One of the reasons is that semialgebraic algebra has a finitary flavour and known strategies do not adapts well to infinitary statements.

\noindent (4) It is easier to find a semialgebraic set $X$ and an ideal $\gta$ of ${\mathcal S}(X)$ such that $\R+\gta$ does not contain ${\mathcal S}^*(X)$. To that end it suffices to choose $X:=\R$ and the ideal $\gta$ in (2). The continuous semialgebraic function
$$
g:\R\to\R,\, x\mapsto \frac{1}{1+x^2}
$$
is bounded but it does not belong to $\R+\gta$. Otherwise there would exist $r\in\R$ and $h\in{\mathcal S}(\R)$ such that
$$
\frac{1}{1+x^2}=g(x)=r+f(x)\cdot h(x).
$$
Evaluating in $x:=0$ it follows $r=1$. Therefore,
$$
f(x)\cdot h(x)=\frac{1}{1+x^2}-1=\frac{-x^2}{1+x^2}
$$
and we get a contradiction evaluating at $x:=-1$ because $f(-1)=0$. 
\end{examples}

\noindent{\bf Acknowledgments} The authors are grateful to the anonymous referee for pointing out many interesting remarks and comments and, in particular, for encourage us to find the last examples in the paper.

\bibliographystyle{amsalpha}

\end{document}